 \DeclareMathAlphabet{\got}{U}{euf}{m}{n}     
\DeclareMathAlphabet{\mat}{U}{msb}{m}{n}     
\DeclareMathAlphabet{\mathbold}{OML}{cmm}{bx}{it} 
\newtheorem{theo}{Th\'eor\`eme}[section]
\newtheorem{lem}{Lemme}[section]
\newtheorem{defi}{D\'efinition}[section]
\newtheorem{cor}{Corollaire}[section]
\newtheorem{rem}{Remarque}[section]
\newtheorem{Ex}{Exemple}[section]
\DeclareMathOperator{\Ad }{Ad }
\newcommand{\card}{\mathop{\mathrm{card}}}
\begin{document}
\title{sur l'indice des sous-alg\`ebres biparaboliques de $\mathfrak{gl}(n)$ }

\date{\today}
\selectlanguage{french}
\author{Meher Bouhani}
\address{ 
Universit\'{e} Tunis El-Manar\\
Facult\'{e} des Sciences de Tunis\\
D\'{e}partement de Math\'{e}matiques\\
Campus Universitaire\\
2092 El-Manar
 \\    Tunis, Tunisie}
\email{Meher.Bouhani@math.univ-poitiers.fr}

\maketitle

\selectlanguage{french} 
\begin{abstract}
On donne quelques propri\'et\'es sur l'indice des sous-alg\`ebres biparaboliques de l'alg\`ebre de Lie complexe $\mathfrak{gl}(n)$ qui permettent d'obtenir l'indice de certaines classes int\'eressantes de ces sous-alg\`ebres et d'en d\'eduire en particulier celles parmi elles qui sont des sous-alg\`ebres de Frobenius.  
\end{abstract}

\section{Introduction}
Soit $\mathfrak{g}$ l'alg\`ebre de Lie d'un groupe de Lie alg\'ebrique complexe $\mathbold{G}$ et $\mathfrak{g}^{\ast}$ son dual. Pour $f\in \mathfrak{g}^{*}$, on note  
$\mathfrak{g}_{f}$ le stabilisateur de $f$ pour l'action coadjointe. On appelle indice de $\mathfrak{g}$ et on note $\chi[\mathfrak{g}]$ la dimension minimale de $\mathfrak{g}_{f}$ lorsque $f$ parcourt $\mathfrak{g}^{\ast}$. Si $\chi[\mathfrak{g}]=0$, $\mathfrak{g}$ est dite une alg\`ebre de Frobenius.\\

Dans toute la suite, les groupes et les alg\`ebres de Lie consid\'er\'es sont alg\'ebriques d\'efinis sur le corps des complexes. Pour tous entiers naturels $r$ et $s$, on notera $r[s]$ le reste de la division euclidienne de $r$ par $s$ et $r \wedge s$ le plus grand commun diviseur de $r$ et $s$.\\

Pour toute paire $(\underline{a},\underline{b})$ {{\color{black} de
    compositions d'un entier $n\in\mathbb{N}^{\times}$},
  $\underline{a}=(a_{1},\ldots,a_{k})$ et
  $\underline{b}=(b_{1},\ldots,b_{t})$, on associe une unique
  sous-alg\`ebre biparabolique de $\mathfrak{gl}(n)$ (\`a conjugaison
  pr\`es) qu'on notera $\mathfrak{q}(\underline{a}\mid\underline{b})$,
  et toutes les sous-alg\`ebres biparaboliques de $\mathfrak{gl}(n)$
  sont ainsi obtenues (voir \cite{MB}). Si $\underline{b}=n$
  (resp. $\underline{a}=n$), $\mathfrak{q}(\underline{a}\mid n)$
  (resp. $\mathfrak{q}(n\mid\underline{b})$) est une sous-alg\`ebre
  parabolique de $\mathfrak{gl}(n)$, on la notera simplement
  $\mathfrak{p}(\underline{a})$
  (resp. $\overline{\mathfrak{p}}(\underline{b}))$. Soient
  $\underline{a}=(a_{1},\ldots,a_{k})\in\mathbb{N}^{k}$ et
  $\underline{b}=(b_{1},\ldots,b_{t})\in\mathbb{N}^{t}$ tels que
  $\sum_{1\leq l\leq k}a_{l}=\sum_{1\leq l\leq t}b_{l}=n$ et
  $\underline{a}^{'}$ (resp. $\underline{b}^{'}$) la composition de
  $n$ obtenue de $\underline{a}$ (resp. $\underline{b}$) en supprimant
  les termes nuls. Alors, on convient que
  $\mathfrak{p}(\underline{a})=\mathfrak{p}(\underline{a}^{'})$ et
  $\mathfrak{q}(\underline{a}\mid\underline{b})=\mathfrak{q}(\underline{a}^{'}\mid\underline{b}^{'})$.\\

Dans \cite{VCH}, les auteurs s'int\'eressent au cas des sous-alg\`ebres paraboliques de $\mathfrak{gl}(n)$ de la forme $\mathfrak{p}(a,\ldots,a,b)$, o\`u $a$ et $b$ sont deux entiers naturels non nuls, et montrent que la propri\'et\'e suivante :
$$ a\;est\;pair\;et\;a\wedge b=1\;\;\;(\ast)$$
est suffisante pour que $\mathfrak{p}_{s}(a,\ldots,a,b):=\mathfrak{p}(a,\ldots,a,b)\cap\mathfrak{sl}(n)$ soit une sous-alg\`ebre de Frobenius de $\mathfrak{sl}(n)$. Pour cette famille de sous-alg\`ebres paraboliques de $\mathfrak{gl}(n)$, on montre le th\'eor\`eme suivant (th\'eor\`eme \ref{thm5}) :

\begin{theo}
Soient $I=\lbrace(a,b)\in\mathbb{N^{\times}}^{2}\mid a\;est\;impair\;ou\;b\;est\;impair\rbrace$, $m\in\mathbb{N}^{\times}$ et $\phi_{m}$ la fonction d\'efinie sur $I$ par :
$$\phi_{m}(a,b)=\begin{cases} [\frac{m}{2}]+1\;si\;a\;impair\;et\;b\;impair\\
[\frac{m+1}{2}]\;si\;a\;impair\;et\;b\;pair\\
1\;si\;a\;pair\;et\;b\;impair\\
\end{cases}$$
Pour tous $a,b,m\in\mathbb{N^{\times}}$, on a :
$$\chi[\mathfrak{p}(\underbrace{a,\ldots,a}_{m},b)]=(a\wedge b)\phi_{m}(\frac{a}{a\wedge b},\frac{b}{a\wedge b})$$.
\end{theo}

Ainsi, on obtient une formule pour l'indice de $\mathfrak{p}(a,\ldots,a,b)$ et on en d\'eduit en particulier que, pour $n=ma+b$, $\mathfrak{p}_{s}(a,\ldots,a,b)$ est une sous-alg\`ebres de Frobenius de $\mathfrak{sl}(n)$ si et seulement si $a\wedge b=1$ et de plus l'une des conditions suivantes est v\'erifi\'ee
\begin{itemize}
\item[(i)]m=1
\item[(ii)]$a$ est pair, $b$ est impair
\item[(iii)]$a$ est impair, $b$ est pair et $m=2$
\end{itemize}
$\\$
Dans \cite{D.K}, Dergachev et Kirillov associent \`a chaque sous-alg\`ebre biparabolique $\mathfrak{q}(\underline{a},\underline{b})$ de $\mathfrak{gl}(n)$ un graphe appel\'e m\'eandre de $\mathfrak{q}(\underline{a},\underline{b})$, et au moyen de ce graphe, ils d\'ecrivent  l'indice de $\mathfrak{q}(\underline{a},\underline{b})$ (voir th\'eor\`eme \ref{thm1}). La description de $\chi[\mathfrak{q}(\underline{a}\mid\underline{b})]$ en fonction de $\underline{a}$ et $\underline{b}$ est une question int\'eressante. En effet, avoir une r\'eponse \`a cette question permettrait en particulier de d\'eterminer les sous-alg\`ebres biparaboliques de $\mathfrak{sl}(n)$ qui sont de Frobenius. Dans \cite{AE} et \cite{VCM}, les auteurs ont obtenu l'indice de $\mathfrak{p}(\underline{a})$ lorsque $k=2$ et $k=3$ :

\begin{theo}\cite{AE}\label{thm0}
$\chi[\mathfrak{p}(a_{1},a_{2})]=a_{1}\wedge a_{2}$
\end{theo}

\begin{theo}\cite{VCM}\label{thm0^{'}}
$\chi[\mathfrak{p}(a_{1},a_{2},a_{3})]=(a_{1}+a_{2})\wedge(a_{2}+a_{3})$
\end{theo}

Dans notre pr\'esent travail, on montre qu'on peut ramener la question
pr\'ec\'edente au cas des sous-alg\`ebres paraboliques de
$\mathfrak{gl}(2n)$ : en effet, \`a toute sous-alg\`ebre biparabolique
$\mathfrak{q}$ de $\mathfrak{gl}(n)$, on associe de mani\`ere
naturelle une sous-alg\`ebre parabolique $\mathfrak{p}$ de
$\mathfrak{gl}(2n)$ ayant le m\^eme indice (voir lemme
\ref{lem1}). {\color{black} Nous d\'emontrons ensuite, d'une mani\`ere
ind\'ependante, le th\'eor\`eme suivant qui est
une g\'en\'eralisation du lemme 4.2 de \cite{karnauhova} :}

\begin{theo}
Soient $\mathfrak{p}(a_{1},\ldots,a_{k})$ une sous-alg\`ebre
parabolique de $\mathfrak{gl}(n)$, $d_{k}=-(a_{1}+\cdots+a_{k-1})$ et
$d_{i}=(a_{1}+\cdots+a_{i-1})-(a_{i+1}+\cdots+a_{k})$, $ 1\leq i\leq
k-1$. Soit $ 1\leq i\leq k$.
\begin{itemize}
\item[1)] Supposons que $d_{i}\neq 0$. Pour tout $\alpha\in\mathbb{Z}$ tel que $a_{i}+\alpha|d_{i}|\geq 0$, on a 
$$\chi[\mathfrak{p}(a_{1},\ldots,a_{k})]=\chi[\mathfrak{p}(a_{1},\ldots,a_{i}+\alpha|d_{i}|,\ldots,a_{k})],\; 1\leq i\leq k$$ $\\$
En particulier, on a 
$$\chi[\mathfrak{p}(a_{1},\ldots,a_{k})]=\chi[\mathfrak{p}(a_{1},\ldots,a_{i-1},a_{i}[|d_{i}|],a_{i+1}\ldots,a_{k})],\; 1\leq i\leq k$$
\item[2)] Supposons que $d_{i}=0$. Alors, on a 
$$\chi[\mathfrak{p}(a_{1},\ldots,a_{k})]=a_{i}+\chi[\mathfrak{p}(a_{1},\ldots,a_{i-1},a_{i+1}\ldots,a_{k})]$$
\end{itemize}
\end{theo}

Ce th\'eor\`eme fournit un algorithme de r\'eduction permettant le calcul de l'indice d'une sous-alg\`ebre parabolique de $\mathfrak{gl}(n)$. De plus, il permet de red\'emontrer d'une mani\`ere simple les th\'eor\`emes \ref{thm0} et \ref{thm0^{'}} (voir th\'eor\`eme \ref{thm2}), d'obtenir l'indice de certaines classes des sous-alg\`ebres paraboliques de $\mathfrak{gl}(n)$ (voir th\'eor\`eme \ref{thm4}) et de donner en particulier de nouvelles familles de sous-alg\`ebres de Frobenius de $\mathfrak{gl}(n)$ (voir corollaire \ref{cor2} et lemme \ref{lem0}).

\section{R\'eduction}

Soit $\mathbold{G}$ un groupe de Lie alg\'ebrique complexe, $\mathfrak{g}$ son alg\`ebre
de Lie et $\mathfrak{g}^{\ast}$ le dual de $\mathfrak{g}$. Au moyen de la repr\'{e}sentation coadjointe, $\mathfrak{g}$ et $\mathbold{G}$ op\`{e}rent dans
$\mathfrak{g}^{*}$ par :
$$(x.f)(y)=f([y,x]), \;\; \forall x,y\in \mathfrak{g}\;\text{et}\;f\in
\mathfrak{g}^{*}$$
$$(x.f)(y)=f(\Ad x^{-1}y),\;\; \forall x\in\mathbold{G},y\in \mathfrak{g}\;\text{et}\;f\in
\mathfrak{g}^{*} $$ 
Pour $f\in \mathfrak{g}^{*}$, soit $\mathbold{G}_{f}$ le stabilisateur de $f$ pour cette action et $\mathfrak{g}_{f}$ son alg\`ebre de Lie:
$$\mathbold{G}_{f}=\{x\in \mathbold{G};f(\Ad x^{-1}y)=f(y),\;\; \forall y\in
\mathfrak{g}\}$$ 
  
$$\mathfrak{g}_{f}=\{x\in \mathfrak{g};f([x,y])=0,\;\; \forall y\in
\mathfrak{g}\}$$ 
On appelle indice de $\mathfrak{g}$ l'entier
$\chi[\mathfrak{g}]$ d\'{e}fini par:
$$\chi[\mathfrak{g}]=\min\{\dim \mathfrak{g}_{f}\;,\;f\in
\mathfrak{g}^{*}\}$$
Si $\chi[\mathfrak{g}]=0$, $\mathfrak{g}$ est dite une alg\`ebre de Frobenius.$\\$

Soient $\underline{a}=(a_{1},\dots, a_{k})$ et $\underline{b}=(b_{1},\ldots,b_{t})$ deux compositions d'un entier $n\in\mathbb{N}^{\times}$. Pour tout $1\leq i\leq k$ (resp. $1\leq j\leq t$), soient $\theta_{i}$ (resp. $\theta^{'}_{j}$) l'involution de $I_{i}=[a_{1}+\dots+a_{i-1}+1,a_{1}+\dots+a_{i-1}+a_{i}]\cap\mathbb{N}$ (resp. $J_{j}=[b_{1}+\dots+b_{j-1}+1,b_{1}+\dots+b_{j}]\cap\mathbb{N}$) d\'efinie par $\theta_{i}(x)=2(a_{1}+\dots+a_{i-1})+a_{i}-x+1,\;x\in I_{i}$ (resp. $\theta^{'}_{j}(y)=2(b_{1}+\dots+b_{j-1})+b_{j}-y+1,\;y\in J_{j}$). Soient $I=\cup_{1\leq i\leq k}I_{i}=\cup_{1\leq j\leq t}J_{j}=[1,n]\cap\mathbb{N}$. On associe \`a la paire $(\underline{a},\underline{b})$, les deux involutions $\theta_{\underline{a}}$ et $\theta_{\underline{b}}$ de $I$ telles que la restriction de $\theta_{\underline{a}}$ (resp. $\theta_{\underline{b}}$) \`a $I_{i}$ (resp. $J_{j}$) est \'egale \`a $\theta_{i}$ (resp. $\theta^{'}_{j}$), $1\leq i\leq k$ (resp. $1\leq j\leq t$). Soit $\mathbb{K}_{(\underline{a},\underline{b})}=<\theta_{\underline{a}},\theta_{\underline{b}}>$ le groupe engendr\'e par $\theta_{\underline{a}}$ et $\theta_{\underline{b}}$, il agit dans $I$.

On associe \`a la paire $(\underline{a},\underline{b})$ une sous-alg\`ebre biparabolique de $\mathfrak{gl}(n)$ not\'ee $\mathfrak{q}(\underline{a}\mid \underline{b})$, qui est unique \`a conjugaison pr\`es par le groupe $GL(n)$ (voir \cite{MB}) et un graphe not\'e $\Gamma(\underline{a}\mid\underline{b})$ et appel\'e m\'eandre associ\'e \`a $\mathfrak{q}(\underline{a}\mid \underline{b})$ ou m\'eandre de $\mathfrak{q}(\underline{a}\mid \underline{b})$, dont les sommets sont $n$ points cons\'{e}cutifs situ\'es sur une droite  horizontale D et num\'erot\'es $1,2,\ldots,n$. Il est construit de la mani\`ere suivante: on relie par un arc au dessous (resp. au dessus) de la droite D toute paire de sommets distincts de $\Gamma(\underline{a}\mid\underline{b})$ de la forme $(x,\theta_{\underline{a}}(x))$ (resp. $(x,\theta_{\underline{b}}(x))$), $x\in I$. 

$\\$
\begin{Ex}\label{Ex6.1} $\Gamma(2,4,3\mid5,2,2)$=
{\setlength{\unitlength}{0.021in}
\raisebox{-12\unitlength}{%
\begin{picture}(100,20)(-1,-11)
\multiput(0,3)(10,0){9}{\circle*{2}}
\put(20,5){\oval(40,20)[t]}
\put(20,5){\oval(20,10)[t]}
\put(55,5){\oval(10,5)[t]}
\put(75,5){\oval(10,5)[t]}
\put(5,1){\oval(10,5)[b]}
\put(35,1){\oval(30,15)[b]}
\put(35,1){\oval(10,5)[b]}
\put(70,1){\oval(20,10)[b]}
\end{picture}
} }
\end{Ex}

Si $\underline{b}=n$, $\mathfrak{q}(\underline{a}\mid n)$ est une sous-alg\`ebre parabolique de $\mathfrak{gl}(n)$ qu'on notera plus simplement $\mathfrak{p}(\underline{a})$, et on notera son m\'eandre $\Gamma(\underline{a})$.

\begin{defi}\cite{MB}
Si X est un sous-graphe de $\Gamma(\underline{a}\mid\underline{b})$, on note $S_{X}$ l'ensemble des sommets de $\Gamma(\underline{a}\mid\underline{b})$ appartenant \`a X.
\end{defi}

\begin{lem}\cite{MB}
Un sous-graphe connexe X de $\Gamma(\underline{a}\mid\underline{b})$ est une composante connexe si et seulement si $S_{X}$ est une $\mathbb{K}_{(\underline{a}\mid\underline{b})}$-orbite.
\end{lem}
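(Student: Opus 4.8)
Le plan est d'identifier les composantes connexes de $\Gamma(\underline{a}\mid\underline{b})$ aux orbites de $\mathbb{K}_{(\underline{a}\mid\underline{b})}$ au niveau des sommets, puis de traduire cette identification en termes de sous-graphes. Je commencerais par observer que, par construction même du méandre, $\{x,y\}$ est une arête de $\Gamma(\underline{a}\mid\underline{b})$ si et seulement si $y=\theta_{\underline{a}}(x)\neq x$ (arête au-dessous de $D$) ou $y=\theta_{\underline{b}}(x)\neq x$ (arête au-dessus de $D$). Comme $x$ n'est relié qu'à ses images $\theta_{\underline{a}}(x)$ et $\theta_{\underline{b}}(x)$, chaque sommet est de degré au plus $2$ ; le méandre est donc une réunion disjointe de chemins et de cycles, et deux composantes connexes distinctes sont disjointes en sommets, chacune étant entièrement déterminée par son ensemble de sommets.

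L'étape centrale consiste à établir l'équivalence : deux sommets $x,y\in I$ sont dans la même composante connexe si et seulement s'ils sont dans la même $\mathbb{K}_{(\underline{a}\mid\underline{b})}$-orbite. Pour le sens direct, une marche $x=x_{0},x_{1},\ldots,x_{m}=y$ le long des arêtes vérifie à chaque pas $x_{l}=\theta_{\underline{a}}(x_{l-1})$ ou $x_{l}=\theta_{\underline{b}}(x_{l-1})$ ; la composée des involutions correspondantes est un élément $g\in\mathbb{K}_{(\underline{a}\mid\underline{b})}$ tel que $y=g\cdot x$, donc $x$ et $y$ sont dans la même orbite. Pour la réciproque, j'écrirais un élément $g$ vérifiant $y=g\cdot x$ comme un mot $\sigma_{m}\cdots\sigma_{1}$ en les générateurs $\theta_{\underline{a}},\theta_{\underline{b}}$, puis je poserais $x_{0}=x$ et $x_{l}=\sigma_{l}(x_{l-1})$ ; la suite $(x_{l})$ relie $x$ à $y$, chaque pas étant soit une arête du méandre, soit un pas trivial $x_{l}=x_{l-1}$ lorsque $x_{l-1}$ est point fixe de $\sigma_{l}$. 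En supprimant les pas triviaux on obtient une marche reliant $x$ et $y$ dans $\Gamma(\underline{a}\mid\underline{b})$, d'où la conclusion.

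J'en déduirais que l'ensemble des sommets de toute composante connexe est une $\mathbb{K}_{(\underline{a}\mid\underline{b})}$-orbite et, réciproquement, que le sous-graphe induit sur toute orbite est connexe et constitue une composante connexe. Puisque les sommets sont de degré au plus $2$, les composantes sont deux à deux disjointes en sommets, de sorte qu'une composante connexe est entièrement caractérisée par son ensemble de sommets. La reformulation de l'énoncé en découle : un sous-graphe connexe $X$ (pris avec les arêtes induites de $\Gamma(\underline{a}\mid\underline{b})$, conformément à la définition de $S_{X}$) est une composante connexe si et seulement si $S_{X}$ est l'ensemble des sommets d'une composante, c'est-à-dire une $\mathbb{K}_{(\underline{a}\mid\underline{b})}$-orbite.

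Le point le plus délicat sera le traitement des points fixes dans le sens réciproque de l'étape centrale : lorsque $a_{i}$ (resp. $b_{j}$) est impair, le milieu du bloc $I_{i}$ (resp. $J_{j}$) est fixé par $\theta_{\underline{a}}$ (resp. $\theta_{\underline{b}}$) et ne porte aucune arête correspondante ; il faudra vérifier soigneusement que l'application d'une involution en un tel point ne modifie ni l'orbite ni la composante, ce qui légitime la suppression des pas triviaux. Ce cas englobe les sommets isolés, fixés par les deux involutions, qui constituent simultanément une composante connexe réduite à un point et une orbite ponctuelle.
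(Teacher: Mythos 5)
This lemma is quoted from \cite{MB}; the present paper gives no proof of it, so there is no internal argument to compare yours with, and your proposal must be judged on its own terms. On those terms it is correct, and it is the natural argument implicit in the paper's construction: edges of $\Gamma(\underline{a}\mid\underline{b})$ are exactly the pairs $\{x,\theta_{\underline{a}}(x)\}$ and $\{x,\theta_{\underline{b}}(x)\}$ with $x$ not fixed, so a walk in the meander translates into a word in the generators of $\mathbb{K}_{(\underline{a},\underline{b})}$ sending one endpoint to the other, and conversely a word $\sigma_{m}\cdots\sigma_{1}$ acting on $x$ yields a walk once the trivial steps at fixed points are deleted --- the point you rightly isolate as the only delicate one, since a fixed point of $\theta_{\underline{a}}$ or $\theta_{\underline{b}}$ carries no corresponding arc, and the step must be shown to be removable rather than edge-realizable. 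Your parenthetical convention that $X$ is taken with the edges induced from $\Gamma(\underline{a}\mid\underline{b})$ is not cosmetic but necessary: for a component which is a closed cycle, the spanning subgraph obtained by deleting one arc is still connected and has $S_{X}$ equal to a full $\mathbb{K}_{(\underline{a},\underline{b})}$-orbit without being a connected component, so the statement is only literally true for induced (full) subgraphs, and it is to your credit that you made this reading explicit. One harmless redundancy: the vertex-disjointness of distinct components and the fact that a component is determined by its vertex set hold in any graph, so the degree-at-most-two observation is not needed for that step (though it is what makes the meander a disjoint union of paths and cycles, which you use correctly elsewhere).
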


\begin{defi}\cite{MB}
Une composante connexe X de $\Gamma(\underline{a}\mid\underline{b})$ est dite un cycle si tout $x\in S_{X}$ est distinct de $\theta_{\underline{a}}(x)$ et de $\theta_{\underline{b}}(x)$.\\
Une composante connexe X de $\Gamma(\underline{a}\mid\underline{b})$ est dite un segment si elle n'est pas un cycle $($i.e. il existe $x\in S_{X}$ v\'erifiant $\theta_{\underline{a}}(x)=x$ ou $\theta_{\underline{b}}(x)=x)$.
\end{defi}

\begin{theo}\label{thm1}\cite{D.K} Soient $\mathfrak{q}(\underline{a}\mid\underline{b})$ une sous-alg\`ebre biparabolique de $\mathfrak{gl}(n)$ et $\Gamma(\underline{a}\mid\underline{b})$ son m\'eandre, on a : 
$$\chi(\mathfrak{q}(\underline{a}\mid\underline{b}))=2\times (nombre\;de\;cycles)+\;nombre\;de\;segments$$
\end{theo}

\begin{rem}
Si $\mathfrak{q}(\underline{a}\mid\underline{b})$ est une
sous-alg\`ebre biparabolique de $\mathfrak{gl}(n)$, alors
$\mathfrak{q}_{s}(\underline{a}\mid\underline{b}):=\mathfrak{q}(\underline{a}\mid\underline{b})\cap\mathfrak{sl}(n)$
est une sous-alg\`ebre biparabolique de $\mathfrak{sl}(n)$ et
$\chi(\mathfrak{q}_{s}(\underline{a}\mid\underline{b}))=\chi(\mathfrak{q}(\underline{a}\mid\underline{b}))-1$.
\end{rem}

\begin{lem}\label{lem1}
Soient $\underline{a}=(a_{1},\ldots,a_{k})$ et
$\underline{b}=(b_{1},\ldots,b_{t})$ deux compositions de
n{\color{black}. Si on pose} $\underline{a}^{'}=(a_{k},\ldots,a_{1})$ et
$\underline{b}^{'}=(b_{t},\ldots,b_{1})$, on a :
\begin{enumerate}
\item[1)]$\chi[\mathfrak{q}(\underline{a}\mid\underline{b})]=\chi[\mathfrak{q}(\underline{b}\mid\underline{a})]$.
\item[2)]$\chi[\mathfrak{q}(\underline{a}\mid\underline{b})]=\chi[\mathfrak{q}(\underline{a}^{'}\mid\underline{b}^{'})]$.
\item[3)]$\chi[\mathfrak{q}(\underline{a}\mid\underline{b})]=\chi[\mathfrak{p}(a_{1},\dots,a_{k},b_{t},\dots,b_{1})]$.
\item[4)] Soit $t_{i}=(a_{1}+\cdots+a_{i})-(a_{i+1}+\cdots+a_{k}),\;1\leq i\leq k-1$, on a:\\
$\chi[\mathfrak{p}(a_{1},\ldots,a_{k})]=\begin{cases}\chi[\mathfrak{q}(a_{1},\ldots,a_{i}\mid a_{k},\ldots,a_{i+1},t_{i})]\;si\;t_{i}\geq 0\\
\chi[\mathfrak{q}(a_{1},\ldots,a_{i},-t_{i}\mid a_{k},\ldots,a_{i+1})]\;si\;t_{i}< 0
\end{cases}\\$ 
\item[5)] Supposons qu'il existe $1\leq i< k$ et $i+1<i^{'}\leq k$ tels que $a_{1}+\cdots+a_{i}=a_{i^{'}}+\cdots+a_{k}$, alors :
$\chi[\mathfrak{p}(\underline{a})]=\chi[\mathfrak{p}(a_{1},\ldots,a_{i},a_{i^{'}},\ldots,a_{k})]+\chi[\mathfrak{p}(a_{i+1},\ldots,a_{i^{'}-1})]$ .
\item[6)] Supposons qu'il existe $1\leq i<k$ et $1\leq j<t$ tels que $a_{1}+\cdots+a_{i}=b_{1}+\cdots+b_{j}$, alors :
$\chi[\mathfrak{q}(\underline{a}\mid\underline{b})]=\chi[\mathfrak{q}(a_{1},\ldots,a_{i}\mid b_{1},\ldots,b_{j})]+\chi[\mathfrak{q}(a_{i+1},\ldots,a_{k}\mid b_{j+1},\ldots,b_{t})]$ .
\end{enumerate} 
\end{lem}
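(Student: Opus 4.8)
The common engine for all six identities is the combinatorial description of the index by the meander: by Theorem~\ref{thm1} one has $\chi[\mathfrak{q}(\underline{a}\mid\underline{b})]=2C+S$, where $C$ and $S$ count the cycles and segments of $\Gamma(\underline{a}\mid\underline{b})$, i.e. the $\mathbb{K}_{(\underline{a},\underline{b})}$-orbits on $I=[1,n]$ that avoid, resp. meet, the fixed-point set of $\theta_{\underline{a}}$ or $\theta_{\underline{b}}$. So each identity reduces to producing a bijection between the relevant orbit sets that preserves the cycle/segment dichotomy. Identities 1) and 2) are immediate symmetries of the meander: exchanging $\underline{a}$ and $\underline{b}$ is the reflection of $\Gamma$ across the line $D$ (it swaps the roles of $\theta_{\underline{a}}$ and $\theta_{\underline{b}}$ but preserves each orbit and its type), while reversing both compositions is the relabelling $x\mapsto n+1-x$; both are graph isomorphisms, hence preserve $C$ and $S$.

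The core is 3). I would set $\underline{c}=(a_{1},\dots,a_{k},b_{t},\dots,b_{1})$, a composition of $2n$, and introduce the \emph{folding} map $\pi\colon[1,2n]\to[1,n]$ with $\pi(x)=x$ for $x\le n$ and $\pi(x)=2n+1-x$ for $x>n$. First I would check that the blocks of $\underline{c}$ lying in $[n+1,2n]$ are exactly the $\theta_{2n}$-reflections of the $\underline{b}$-blocks, so that on the two sheets $[1,n]$ and $[n+1,2n]$ the parabolic involution $\theta_{\underline{c}}$ induces $\theta_{\underline{a}}$ and (through $\pi$) $\theta_{\underline{b}}$ respectively, while the total reflection $\theta_{2n}$ is precisely the sheet swap identifying the two fibres of $\pi$. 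The claim is then that $O\mapsto\pi^{-1}(O)$ is a bijection from the $\mathbb{K}_{(\underline{a},\underline{b})}$-orbits on $[1,n]$ onto the $\langle\theta_{\underline{c}},\theta_{2n}\rangle$-orbits on $[1,2n]$: each $\pi^{-1}(O)$ is $\langle\theta_{\underline{c}},\theta_{2n}\rangle$-stable, and it is a \emph{single} orbit because $\theta_{2n}$ joins the two points of every fibre while $\theta_{\underline{c}}$ simulates $\theta_{\underline{a}}$ and $\theta_{\underline{b}}$ within a sheet. Since $\theta_{2n}$ is fixed-point-free ($2n$ being even) and the fixed points of $\theta_{\underline{c}}$ on the two sheets are exactly those of $\theta_{\underline{a}}$ and of $\theta_{\underline{b}}$, the orbit $\pi^{-1}(O)$ is a cycle (resp. a segment) iff $O$ is; Theorem~\ref{thm1} then yields $\chi[\mathfrak{p}(\underline{c})]=2C+S=\chi[\mathfrak{q}(\underline{a}\mid\underline{b})]$.

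Identities 5) and 6) are direct-sum decompositions. If $a_{1}+\dots+a_{i}=b_{1}+\dots+b_{j}=s$, then $s$ is simultaneously an $\underline{a}$- and a $\underline{b}$-block boundary, so both $\theta_{\underline{a}}$ and $\theta_{\underline{b}}$ preserve the partition $[1,s]\sqcup[s+1,n]$; hence $\Gamma(\underline{a}\mid\underline{b})$ splits as the disjoint union of $\Gamma(a_{1},\dots,a_{i}\mid b_{1},\dots,b_{j})$ and $\Gamma(a_{i+1},\dots,a_{k}\mid b_{j+1},\dots,b_{t})$, and the quantities $2C+S$ add, giving 6). Identity 5) is the same argument for a parabolic: under the hypothesis $a_{1}+\dots+a_{i}=a_{i'}+\dots+a_{k}=s$ the total reflection $\theta_{n}$ leaves stable the central block $[s+1,n-s]$ (acting there as its own total reflection) and exchanges the two outer blocks $[1,s]$ and $[n-s+1,n]$; the meander therefore decomposes into the parabolic meander of the middle, $\Gamma(a_{i+1},\dots,a_{i'-1})$, and that of the glued outer part, $\Gamma(a_{1},\dots,a_{i},a_{i'},\dots,a_{k})$, whence the additivity.

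Finally, 4) is the folding of a parabolic along its own total reflection. For $t_{i}\ge 0$ one has $s:=a_{1}+\dots+a_{i}\ge n/2$, and $\theta_{n}$ leaves stable the central interval of length $t_{i}=2s-n$, restricting there to an internal reflection; running the folding of 3) with $\theta_{n}$ as the gluing involution matches this central interval with the extra block $t_{i}$ and folds the two outer halves, yielding $\Gamma(a_{1},\dots,a_{i}\mid a_{k},\dots,a_{i+1},t_{i})$ (equivalently, via 3) one reduces to the invariance of $\chi[\mathfrak{p}]$ under inserting the balancing block $t_{i}$). The case $t_{i}<0$ then follows from the case $t_{i}\ge 0$ together with the symmetries 1) and 2). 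The main obstacle throughout is the orbit bijection of 3): one must argue carefully that every fibre of $\pi$ lies in a single upstairs orbit — this is exactly where the fixed-point-free sheet-swap $\theta_{2n}$ is needed — and that, in the parabolic folding of 4), the possibly non-empty central stable block is matched precisely by the extra block $t_{i}$, so that no segment is created or destroyed.
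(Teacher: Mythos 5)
Your proposal is correct and follows essentially the same route as the paper: orbit/meander combinatorics via Theorem~\ref{thm1}, with 1) and 2) as symmetries of the meander, 3) and 4) by the folding (unfolding) bijection between $\mathbb{K}_{(\underline{a},\underline{b})}$-orbits on $[1,n]$ and $\langle\theta_{\underline{c}},\theta_{2n}\rangle$-orbits on $[1,2n]$ preserving the cycle/segment dichotomy (using that $\theta_{2n}$ is fixed-point free), and 5), 6) as disjoint-union decompositions of the meander. The only cosmetic difference is that the paper deduces 6) from 3) and 5), whereas you split the meander directly; the content is the same.
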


\begin{proof}
\begin{enumerate}
\item[1)] Remarquons que $\mathbb{K}_{(\underline{a},\underline{b})}=\mathbb{K}_{(\underline{b},\underline{a})}$, par suite $\Gamma(\underline{b}\mid\underline{a})$ et $\Gamma(\underline{a}\mid\underline{b})$ ont les m\^emes composantes connexes. Le r\'esultat se d\'eduit du th\'eor\`eme \ref{thm1}.
\item[2)] Soit $\theta$ l'involution de $I$ d\'efinie par $\theta(x)=n-x+1,\;x\in I$. On a $\theta_{\underline{a}^{'}}=\theta\theta_{\underline{a}}\theta$ et $\theta_{\underline{b}^{'}}=\theta\theta_{\underline{b}}\theta$, par suite $\mathbb{K}_{(\underline{a}^{'},\underline{b}^{'})}$ est le conjugu\'e de $\mathbb{K}_{(\underline{a},\underline{b})}$ par $\theta$. De plus, un point $x\in I$ est invariant par $\theta_{\underline{a}}$ (resp. $\theta_{\underline{b}}$) si et seulement si $\theta(x)$ est invariant par $\theta_{\underline{a}^{'}}$ (resp. $\theta_{\underline{b}^{'}}$). D'o\`u $\Gamma(\underline{a}\mid\underline{b})$ et $\Gamma(\underline{a}^{'}\mid\underline{b}^{'})$ ont les m\^emes composantes connexes.
\item[3)] Soient
  $\underline{c}=(a_{1},\cdots,a_{k},b_{t},\cdots,b_{1})$,
  $I^{'}=[1,2n]\cap\mathbb{N}$ et $\theta$ l'involution de $I^{'}$
  d\'efinie par $\theta(x)=2n-x+1,\;x\in I^{'}$. Alors
  $\theta_{\underline{a}}$ (resp. $\theta_{\underline{b}}$) est la
  restriction \`a $I$ de $\theta_{\underline{c}}$
  (resp. $\theta\theta_{\underline{c}}\theta$). Par suite, prendre
  l'intersection avec $I$, induit une bijection de l'ensemble des
  $\mathbb{K}_{(\underline{c},2n)}$-orbites de $I^{'}$ sur l'ensemble
  des $\mathbb{K}_{(\underline{a},\underline{b})}$-orbites de $I$. De
  plus, un point $x\in I$ est invariant par $\theta_{\underline{b}}$
  si et seulement si $\theta(x)$ est invariant par
  $\theta_{\underline{c}}$. D'o\`u, il existe une bijection de
  l'ensemble des composantes connexes de
  $\Gamma(\underline{a}\mid\underline{b})$ sur celui des composantes
  connexes de $\Gamma(\underline{c}\mid 2n)$ qui conserve le nombre de
  cycles, ainsi que le nombre de segments. Le r\'esultat se d\'eduit
  du th\'eor\`eme \ref{thm1}.
\item[4)] Supposons $t_{i}\geq 0$, le r\'esultat se d\'emontre de la m\^eme mani\`ere que $3)$(voir figure 1).\\
Supposons $t_{i}\leq 0$, on a :
\begin{align*}
\chi[\mathfrak{p}(a_{1},\ldots,a_{k})]&=\chi[\mathfrak{p}(a_{k},\ldots,a_{1})]\\
&=\chi[\mathfrak{q}(a_{k},\ldots,a_{i+1}\mid a_{1},\ldots,a_{i},-t_{i})]\\
&=\chi[\mathfrak{q}(a_{1},\ldots,a_{i},-t_{i}\mid a_{k},\ldots,a_{i+1})]
\end{align*}
\item[5)] R\'esulte du fait que
  $\Gamma(\underline{a}\mid\underline{b})$ est r\'eunion disjointe
  {\color{black} des
    m\'eandres} $\Gamma(a_{1},\ldots,a_{i},a_{i^{'}},\ldots,a_{k})$ et
    $\Gamma(a_{i+1},\ldots,a_{i^{'}-1})$.
\item[6)]R\'esulte de 3) et 5).
\end{enumerate}

\end{proof}
$\\$
$\\$
$\\$
$\\$
$\\$
$\\$
$\\$
$\\$
$\\$
$\\$
$\\$
$\\$
\begin{figure}[htb]\label{fig10}
 
{\setlength{\unitlength}{0.021in}
\begin{center}
\begin{picture}(200,20)(-30,-80)

\put(79,0){\oval(10,10)[t]}
\put(150.5,-50){\oval(10,10)[t]}

\put(3.5,0){\oval(20,20)[b]}
\put(76.8,0){\oval(15,15)[b]}

\put(85,-50){\oval(20,20)[b]}
\put(148,-50){\oval(15,15)[b]}

\qbezier[140](-6,0)(86,0)(166,0)
\put(-34,-2){$\Gamma(\underline{a})=$}
\put(1,-15){$a_{1}$}
\put(74,-15){$a_{i}$}
\put(148,-15){$a_{k}$}
\put(91,-15){$a_{i+1}$}
\put(76,7.5){$t_{i}$}

\qbezier[75](75,-50)(95,-50)(155,-50)
\put(-33,-52){$\Gamma(a_{1},\ldots,a_{i}\mid a_{k},\ldots,a_{i+1},t_{i})=$}
\put(82,-66){$a_{1}$}
\put(146,-66){$a_{i}$}
\put(85,-36){$a_{k}$}
\put(147,-36){$t_{i}$}
\put(127,-36){$a_{i+1}$}

\linethickness{0.5mm}

\put(80,00){\oval(172.5,60)[t]}
\put(80.5,0){\oval(111.5,40)[t]}
\put(77.5,0){\oval(57,25)[t]}
\put(79.25,0){\oval(13.5,13)[t]}

\put(90,-50){\oval(30,20)[t]}
\put(134,-50){\oval(20,15)[t]}

\put(151,0){\oval(30,20)[b]}
\put(96,0){\oval(20,15)[b]}

\end{picture}
\end{center}
 }
\caption{}
\end{figure}

\begin{lem}\label{lem2}
\begin{itemize}

Soient $(a_{1},\ldots,a_{k})$ une composition de n, $d_{k}=-(a_{1}+\cdots+a_{k-1})$ et $d_{i}=(a_{1}+\cdots+a_{i-1})-(a_{i+1}+\cdots+a_{k})$, $ 1\leq i\leq k-1$. On a \\ 
\item[1)]
$\chi[\mathfrak{p}(a_{1},\ldots,a_{k})]=
\chi[\mathfrak{p}(a_{1},\ldots,a_{i},|a_{i}+d_{i}|,a_{i+1},\ldots,a_{k})]$, $ 1\leq i\leq k-1$.
\item[2)] Pour tout $1\leq i\leq k$ tel que $a_{i}+d_{i}\geq 0$, on a :
 $$\chi[\mathfrak{p}(a_{1},\ldots,a_{k})]=\chi[\mathfrak{p}(a_{1},\ldots,a_{i-1},a_{i}+d_{i},a_{i+1},\ldots,a_{k})]$$
\end{itemize}
\end{lem}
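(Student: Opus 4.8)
The plan is to obtain both parts from the folding correspondence between parabolic and biparabolic meanders recorded in Lemma \ref{lem1}, applied at well-chosen positions. Throughout I would set $t_i=a_i+d_i=(a_1+\cdots+a_i)-(a_{i+1}+\cdots+a_k)$ for $1\le i\le k-1$, which is exactly the quantity named $t_i$ in part 4) of Lemma \ref{lem1}, so that the block inserted in part 1) is $|a_i+d_i|=|t_i|$.

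First I would dispatch part 1) by a direct fold-and-unfold. If $t_i\ge 0$, part 4) of Lemma \ref{lem1} gives $\chi[\mathfrak{p}(a_1,\ldots,a_k)]=\chi[\mathfrak{q}(a_1,\ldots,a_i\mid a_k,\ldots,a_{i+1},t_i)]$, and part 3) of Lemma \ref{lem1} (which appends the reversed right composition) rewrites this as $\chi[\mathfrak{p}(a_1,\ldots,a_i,t_i,a_{i+1},\ldots,a_k)]$; if $t_i<0$ the same two steps, now in the case $t_i<0$ of part 4), give $\chi[\mathfrak{p}(a_1,\ldots,a_i,-t_i,a_{i+1},\ldots,a_k)]$. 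As the inserted block is $|t_i|$ in either case, part 1) follows.

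The substance is part 2). Write $A=(a_1,\ldots,a_k)$ and $A'=(a_1,\ldots,a_{i-1},a_i+d_i,a_{i+1},\ldots,a_k)$, with $a_i+d_i=t_i\ge 0$. The computation at the heart of the argument is that the $t$-value of $A'$ at position $i-1$ equals
$$(a_1+\cdots+a_{i-1})-\bigl(t_i+a_{i+1}+\cdots+a_k\bigr)=-a_i,$$
since $t_i+(a_{i+1}+\cdots+a_k)=a_1+\cdots+a_i$. Hence, for an interior index $2\le i\le k-1$, applying part 1) to $A$ at position $i$ inserts the block $t_i$ just after $a_i$, while applying part 1) to $A'$ at position $i-1$ inserts the block $|-a_i|=a_i$ just after $a_{i-1}$; both operations produce the same composition $B=(a_1,\ldots,a_{i-1},a_i,t_i,a_{i+1},\ldots,a_k)$, so $\chi[\mathfrak{p}(A)]=\chi[\mathfrak{p}(B)]=\chi[\mathfrak{p}(A')]$.

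It remains to treat the two endpoints. By the reversal symmetry in part 2) of Lemma \ref{lem1}, the case $i=1$ is the case $i=k$ read backwards, so I would only do $i=k$, where $a_k+d_k=a_k-(a_1+\cdots+a_{k-1})=t_k\ge 0$ and $A'=(a_1,\ldots,a_{k-1},t_k)$. Folding $A$ at position $k-1$ (where $t_{k-1}=-t_k\le 0$) via part 4) of Lemma \ref{lem1} gives $\chi[\mathfrak{p}(A)]=\chi[\mathfrak{q}(a_1,\ldots,a_{k-1},t_k\mid a_k)]$ (a trailing $0$ being dropped when $t_k=0$); since the right composition is the single block $a_k$, equal to the total size $a_1+\cdots+a_{k-1}+t_k=a_k$, this biparabolic is the parabolic $\mathfrak{p}(a_1,\ldots,a_{k-1},t_k)=\mathfrak{p}(A')$, whence the claim. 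The step I expect to demand the most care is precisely the sign and block bookkeeping in the interior case—verifying that the $t$-value of $A'$ at position $i-1$ is $-a_i$, so that the two invocations of part 1) land on one and the same composition $B$—together with isolating the genuinely different endpoint case, handled above by the single-block observation and reversal.
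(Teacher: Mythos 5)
Your proposal is correct and follows essentially the same route as the paper: part 1) by folding with part 4) of Lemma \ref{lem1} and unfolding with part 3), and part 2) by the key observation that the relevant difference for $A'$ at position $i-1$ equals $-a_i$, so that applying part 1) to $A$ at position $i$ and to $A'$ at position $i-1$ yields the same composition $B$ (this is precisely the paper's computation with $\underline{b}^{i}$ and $T^{i}_{i-1}$). The only cosmetic difference is at the endpoints, where you treat $i=k$ directly and deduce $i=1$ by reversal, while the paper does the mirror image.
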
 

\begin{proof}
\begin{enumerate}
\item[1)]
Soit $1\leq i\leq k-1$. Si $a_{i}+d_{i}\geq 0$, D'apr\`es le 4) puis le 3) du lemme \ref{lem1}, on a :
\begin{align*}
\chi[\mathfrak{p}(a_{1},\ldots,a_{k})]&=\chi[\mathfrak{q}(a_{1},\ldots,a_{i}\mid a_{k},\ldots,a_{i+1},a_{i}+d_{i})]\\
&=\chi[\mathfrak{p}(a_{1},\ldots,a_{i},a_{i}+d_{i},a_{i+1},\ldots,a_{k})]\\
\end{align*}
De m\^eme, si $a_{i}+d_{i}\leq 0$, on a :
\begin{align*}
\chi[\mathfrak{p}(a_{1},\ldots,a_{k})]&=\chi[\mathfrak{q}(a_{1},\ldots,a_{i},|a_{i}+d_{i}|\mid a_{k},\ldots,a_{i+1})]\\
&=\chi[\mathfrak{p}(a_{1},\ldots,a_{i},|a_{i}+d_{i}|,a_{i+1},\ldots,a_{k})]\\
\end{align*}
 \item[2)] Pour tout $2\leq i\leq k-1$ tel que $a_{i}+d_{i}\geq 0$, consid\'erons la composition $\underline{b}^{i}=(b^{i}_{1},\ldots,b^{i}_{k})$ o\`u $b^{i}_{j}=a_{j}$ si $j\neq i$ et $b^{i}_{i}=a_{i}+d_{i}$. Soit $T^{i}_{i-1}=(b^{i}_{1}+\dots+b^{i}_{i-2})-(b^{i}_{i}+\dots+b^{i}_{k}),\;2\leq i\leq k-1$. On v\'erifie que $|b^{i}_{i-1}+T^{i}_{i-1}|=a_{i},\;2\leq i\leq k-1$. Il r\'esulte de 1) que pour tout $2\leq i\leq k-1$, on a :
  \begin{align*}
\chi[\mathfrak{p}(b^{i}_{1},\ldots,b^{i}_{k})]&=\chi[\mathfrak{q}(b^{i}_{1},\ldots,b^{i}_{i-1},|b^{i}_{i-1}+T^{i}_{i-1}|,b^{i}_{i},\ldots,b^{i}_{k})]\\
&=\chi[\mathfrak{p}(a_{1},\ldots,a_{i},a_{i}+d_{i},a_{i+1},\ldots,a_{k})]\\
&=\chi[\mathfrak{p}(a_{1},\ldots,a_{k})]\\
\end{align*}
D'o\`u le r\'esultat pour $2\leq i\leq k-1$.\\
Supposons $a_{1}+d_{1}\geq 0$, il r\'esulte de lemme \ref{lem1} que :
\begin{align*}
\chi[\mathfrak{p}(a_{1},\ldots,a_{k})]&=\chi[\mathfrak{p}(a_{1},a_{1}+d_{1},a_{2},\ldots,a_{k})]\\
&=\chi[\mathfrak{q}(a_{1}\mid a_{k},\ldots,a_{2},a_{1}+d_{1})]\\
&=\chi[\mathfrak{p}(a_{k},\ldots,a_{2},a_{1}+d_{1})]\\
&=\chi[\mathfrak{p}(a_{1}+d_{1},a_{2},\ldots,a_{k})]\\
\end{align*}
D'o\`u le r\'esultat pour $i=1$.\\
Puisque $\chi[\mathfrak{p}(a_{1},\ldots,a_{k})]=\chi[\mathfrak{p}(a_{k},\ldots,a_{1})]$, on en d\'eduit alors le r\'esultat pour $i=k$. 
  \end{enumerate} 
\end{proof}

\begin{Ex}\label{Exemple1.1} Soit $\Gamma(5,1,2,4)$. Puisque $|5+1-4|=2$, alors  $\chi[\mathfrak{p}(5,1,2,4)]=\chi[\mathfrak{p}(5,1,4)]$. Aussi $|5-4|=1$, alors $\chi[\mathfrak{p}(5,1,4)]=\chi[\mathfrak{p}(5,4)]=1$.  
$\\$
$\\$
$\\$
$\\$
$\\$
$\Gamma(5,1,2,4)$=
{\setlength{\unitlength}{0.021in}
\raisebox{-12\unitlength}{%
\begin{picture}(100,20)(-1,-11)
\multiput(0,3)(10,0){12}{\circle*{2}}
\put(55,3){\oval(110,60)[t]}
\put(55,3){\oval(90,50)[t]}
\put(55,3){\oval(70,40)[t]}
\put(55,3){\oval(50,30)[t]}

\put(20,3){\oval(40,25)[b]}
\put(20,3){\oval(20,15)[b]}
\put(95,2){\oval(30,20)[b]}
\put(95,2){\oval(10,10)[b]}

\linethickness{0.5mm}
\put(55,4){\oval(30,20)[t]}
\put(55,4){\oval(10,10)[t]}
\put(65,2){\oval(10,10)[b]}
\end{picture}
} } 
$\\$
$\\$
$\\$
$\\$
$\\$
$\Gamma(5,1,4)$=
{\setlength{\unitlength}{0.021in}
\raisebox{-12\unitlength}{%
\begin{picture}(100,20)(-1,-11)
\multiput(0,3)(10,0){10}{\circle*{2}}

\put(45,3){\oval(90,50)[t]}
\put(45,3){\oval(70,40)[t]}
\put(45,3){\oval(50,30)[t]}
\put(45,3){\oval(30,20)[t]}

\put(20,3){\oval(40,25)[b]}
\put(20,3){\oval(20,15)[b]}
\put(75,3){\oval(30,20)[b]}
\put(75,3){\oval(10,10)[b]}

\linethickness{0.5mm}
\put(45,4){\oval(10,10)[t]}
\end{picture}
} } 
$\\$
$\\$
$\\$
$\\$
$\\$
$\Gamma(5,4)$=
{\setlength{\unitlength}{0.021in}
\raisebox{-12\unitlength}{%
\begin{picture}(100,20)(-1,-11)
\multiput(0,3)(10,0){9}{\circle*{2}}

\put(40,3){\oval(80,45)[t]}
\put(40,3){\oval(60,35)[t]}
\put(40,3){\oval(40,25)[t]}
\put(40,3){\oval(20,15)[t]}

\put(20,3){\oval(40,25)[b]}
\put(20,3){\oval(20,15)[b]}
\put(65,3){\oval(30,20)[b]}
\put(65,3){\oval(10,10)[b]}

\end{picture}
} } 
\end{Ex}
$\\$
$\\$

Le th\'eor\`eme suivant est une cons\'equense im\'ediate des lemmes \ref{lem1} et \ref{lem2}. 

\begin{theo}\label{cor1}
Soient $\mathfrak{p}(a_{1},\ldots,a_{k})$ une sous-alg\`ebre parabolique de $\mathfrak{gl}(n)$, $d_{k}=-(a_{1}+\cdots+a_{k-1})$ et $d_{i}=(a_{1}+\cdots+a_{i-1})-(a_{i+1}+\cdots+a_{k})$, $ 1\leq i\leq k-1$. Soit $ 1\leq i\leq k$.
\begin{itemize}
\item[1)] Supposons que $d_{i}\neq 0$. Pour tout $\alpha\in\mathbb{Z}$ tel que $a_{i}+\alpha|d_{i}|\geq 0$, on a 
$$\chi[\mathfrak{p}(a_{1},\ldots,a_{k})]=\chi[\mathfrak{p}(a_{1},\ldots,a_{i}+\alpha|d_{i}|,\ldots,a_{k})],\; 1\leq i\leq k$$ $\\$
En particulier, on a 
$$\chi[\mathfrak{p}(a_{1},\ldots,a_{k})]=\chi[\mathfrak{p}(a_{1},\ldots,a_{i-1},a_{i}[|d_{i}|],a_{i+1}\ldots,a_{k})],\; 1\leq i\leq k$$
\item[2)] Supposons que $d_{i}=0$. Alors, on a 
$$\chi[\mathfrak{p}(a_{1},\ldots,a_{k})]=a_{i}+\chi[\mathfrak{p}(a_{1},\ldots,a_{i-1},a_{i+1}\ldots,a_{k})]$$
\end{itemize}
\end{theo}

\begin{lem}\label{lemP}
Si $k\geq 2$ est un entier, il n'existe pas de k-uplet $(a_{1},\ldots,a_{k})$ de r\'eels v\'erifiant les in\'egalit\'es
$$0<a_{i}<|a_{1}+\cdots+a_{i-1}-(a_{i+1}+\cdots+a_{k})|,\;1\leq i\leq k$$
\end{lem}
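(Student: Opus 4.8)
L'idée est de transformer ce système d'inégalités, qui porte \emph{a priori} sur des sommes signées compliquées, en une simple condition sur les signes d'une suite de sommes partielles centrées. On poserait d'abord $P_0=0$, $P_i=a_1+\cdots+a_i$ pour $1\leq i\leq k$, et $S=P_k=a_1+\cdots+a_k$. Comme tous les $a_i$ sont strictement positifs, on a $S>0$. Avec ces notations, $a_1+\cdots+a_{i-1}=P_{i-1}$ et $a_{i+1}+\cdots+a_k=S-P_i$, de sorte que la quantité apparaissant dans l'énoncé s'écrit
$$d_i:=(a_1+\cdots+a_{i-1})-(a_{i+1}+\cdots+a_k)=P_{i-1}+P_i-S.$$
On introduirait ensuite la suite centrée $Q_i=P_i-\tfrac{S}{2}$ pour $0\leq i\leq k$, qui fournit les deux identités clefs $a_i=Q_i-Q_{i-1}$ et $d_i=Q_{i-1}+Q_i$.

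L'étape centrale serait de raisonner par l'absurde en supposant qu'un tel $k$-uplet existe, puis de réécrire l'inégalité $a_i<|d_i|$. Pour chaque $i$, la quantité $a_i=Q_i-Q_{i-1}$ est strictement positive, donc on peut élever l'inégalité au carré sans en changer le sens : $a_i<|d_i|$ équivaut à $(Q_i-Q_{i-1})^2<(Q_{i-1}+Q_i)^2$, c'est-à-dire, après simplification, à $4\,Q_{i-1}Q_i>0$. Ainsi la condition $0<a_i<|d_i|$ se traduit exactement par $Q_{i-1}Q_i>0$ pour tout $1\leq i\leq k$ : deux termes consécutifs de la suite $(Q_i)$ sont toujours non nuls et de même signe.

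Il ne resterait plus qu'à confronter cette propagation de signe aux valeurs aux bords, qui sont imposées. On a en effet $Q_0=-\tfrac{S}{2}<0$ et $Q_k=\tfrac{S}{2}>0$. Or $Q_{i-1}Q_i>0$ transporte le signe le long de la suite : partant de $Q_0<0$, une récurrence immédiate donnerait $Q_i<0$ pour tout $i$, et en particulier $Q_k<0$, ce qui contredit $Q_k=\tfrac{S}{2}>0$. Cette contradiction achèverait la preuve.

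Le point délicat n'est pas tant la conclusion, qui devient transparente une fois le changement de variable effectué, que la reconnaissance de la bonne reformulation : il faut deviner que l'inégalité $a_i<|d_i|$ dissimule la condition de même signe $Q_{i-1}Q_i>0$, et que le système tout entier se ramène alors au fait impossible qu'une suite de signe constant puisse relier une valeur négative $Q_0$ à une valeur positive $Q_k$.
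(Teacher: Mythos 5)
Votre d\'emonstration est correcte et compl\`ete : avec $Q_{i}=P_{i}-\frac{S}{2}$ on a bien $a_{i}=Q_{i}-Q_{i-1}$ et $d_{i}=Q_{i-1}+Q_{i}$ ; comme $a_{i}>0$, l'\'el\'evation au carr\'e est licite et donne l'\'equivalence $a_{i}<|d_{i}|\Leftrightarrow Q_{i-1}Q_{i}>0$, et la propagation du signe depuis $Q_{0}=-\frac{S}{2}<0$ jusqu'\`a $Q_{k}=\frac{S}{2}>0$ fournit la contradiction. La route est toutefois diff\'erente de celle de l'article, qui raisonne par indice extr\'emal : l'in\'egalit\'e pour $i=k$ donne $a_{k}<a_{1}+\cdots+a_{k-1}$, donc l'ensemble $\Xi=\{1\leq i\leq k-1 : a_{1}+\cdots+a_{i}>a_{i+1}+\cdots+a_{k}\}$ est non vide ; en son plus petit \'el\'ement $i_{0}$ on a simultan\'ement $a_{1}+\cdots+a_{i_{0}-1}\leq a_{i_{0}}+\cdots+a_{k}$ et $a_{1}+\cdots+a_{i_{0}}>a_{i_{0}+1}+\cdots+a_{k}$, d'o\`u $a_{i_{0}}\geq |d_{i_{0}}|$, ce qui contredit l'hypoth\`ese en ce seul indice. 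Les deux preuves exploitent le m\^eme ph\'enom\`ene sous-jacent --- la suite des sommes partielles $P_{i}$ doit franchir $\frac{S}{2}$, et au point de franchissement on a $a_{i}\geq|d_{i}|$ --- mais vous le globalisez : votre identit\'e alg\'ebrique montre que le syst\`eme \'equivaut exactement \`a ce que tous les $P_{i}$ restent du m\^eme c\^ot\'e de $\frac{S}{2}$, ce qui rend la contradiction imm\'ediate aux bords, \'evite l'extraction d'un indice minimal, et fournit une reformulation r\'eutilisable du syst\`eme ; la preuve de l'article, elle, n'utilise que des comparaisons d'ordre (aucune \'el\'evation au carr\'e) mais repose sur l'argument un peu moins transparent du premier franchissement.
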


\begin{proof}
On peut ramener la d\'emonstration au cas $a_{k}<a_{1}+\cdots+a_{k-1}$, en particulier l'ensemble $\Xi:=\{1\leq i\leq k-1\; tel\;que\;a_{1}+\cdots+a_{i}>a_{i+1}+\cdots+a_{k}\}$ est non vide ($k-1\in \Xi$). Soit $i_{0}=\inf\{i\in\Xi\}$, on voit alors que $i_{0}$ v\'erifie $a_{1}+\cdots+a_{i_{0}-1}\leq a_{i_{0}}+\cdots+a_{k}$ et $a_{1}+\cdots+a_{i_{0}}>a_{i_{0}+1}+\cdots+a_{k}$, en particulier $a_{i_{0}}\geq |a_{1}+\cdots+a_{i_{0}-1}-(a_{i_{0}+1}+\cdots+a_{k})|$. D'o\`u le r\'esultat.
\end{proof}

\begin{rem}\label{rem2} Il suit du lemme pr\'ec\'edent et du lemme \ref{lem1} que le th\'eor\`eme $\ref{cor1}$ donne un algorithme pour le calcul de l'indice des sous-alg\`ebres biparaboliques de $\mathfrak{gl}(n)$.
\end{rem} 

\begin{Ex}
\begin{align*}
\chi[\mathfrak{q}(111,13,79\mid 165,18,20)]&=\chi[\mathfrak{p}(111,13,79,20,18,165)]\\
&=\chi[\mathfrak{p}(111,13,20,18,165)]\\
&=\chi[\mathfrak{p}(111,13,20,18,3)]\\
&=\chi[\mathfrak{p}(3,13,20,18,3)]\\
&=\chi[\mathfrak{p}(3,3)]+\chi[\mathfrak{p}(13,20,18)]\\
&=(3\wedge 3)+((13+20)\wedge(20+18))\\
&=4\\
\end{align*}
\end{Ex}

\begin{theo}\label{thm2} Pour tous $a_{1},a_{2}\;et\;a_{3}\in\mathbb{N}^{\times}$, on a :

\begin{itemize}

\item[1)]  $\chi[\mathfrak{p}(a_{1},a_{2})]=a_{1}\wedge a_{2}$.
\item[2)]   $\chi[\mathfrak{p}(a_{1},a_{2},a_{3})]=(a_{1}+a_{2})\wedge (a_{2}+a_{3})$.
\end{itemize}

\end{theo}

\begin{proof}
\begin{itemize}

\item[1)]C'est clair d'apr\`es le th\'eor\`eme \ref{cor1}.

\item[2)] Soit $n=a_{1}+a_{2}+a_{3}$, le r\'esultat est vrai pour $n=3$ puisque $\chi[\mathfrak{p}(1,1,1)]=2$. Supposons que $n>3$, si $a_{1}=a_{3}$, le r\'esultat est vrai d'apr\`es le lemme \ref{lem1}, si $a_{1}\neq a_{3}$, il suit du lemme \ref{lemP} que les entiers naturels $a_{1},a_{2}$ et $a_{3}$ v\'erifient l'une des conditions suivantes :\\
i) $a_{1}\geq a_{2}+a_{3}$\\
ii) $a_{3}\geq a_{1}+ a_{2}$\\
iii) $a_{2}\geq|a_{1}-a_{3}|$

En appliquant le th\'eor\`eme \ref{cor1}, le r\'esultat s'en d\'eduit par r\'ecurrence sur $n$.
\end{itemize}
\end{proof}

\begin{cor}\label{cor2} Soient $a,b,c,\alpha\in\mathbb{N}^{\times}$, on a :
\begin{itemize}
\item[1)]$\chi[\mathfrak{p}(\alpha(a+b+c),a,b,c)]=\chi[\mathfrak{p}(a,\alpha|a-b-c|,b,c)]=\chi[\mathfrak{p}(a,b,\alpha|a+b-c|,c)]=\chi[\mathfrak{p}(a,b,c)]$
\item[2)] Supposons $(a+b)\wedge(b+c)=1$, les sous-alg\`ebres $\mathfrak{p}_{s}(a,b,c)$,$\mathfrak{p}_{s}(\alpha(a+b+c),a,b,c)$, $\mathfrak{p}_{s}(a,\alpha|a-b-c|,b,c)$, $\mathfrak{p}_{s}(a,b,\alpha|a+b-c|,c)$ et $\mathfrak{p}_{s}(a,b,c,\alpha(a+b+c))$ sont des sous-alg\`ebres de Frobenius de $\mathfrak{sl}(n)$.

\end{itemize}
\end{cor}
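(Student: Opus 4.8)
The plan is to derive everything from the reduction theorem \ref{cor1}, from the explicit value $\chi[\mathfrak{p}(a_{1},a_{2},a_{3})]=(a_{1}+a_{2})\wedge(a_{2}+a_{3})$ of th\'eor\`eme \ref{thm2}, and from the passage to $\mathfrak{sl}(n)$ recorded in the remark following th\'eor\`eme \ref{thm1}. The decisive observation is that in the definition of $d_{i}$ the quantity $d_{i}$ depends only on the terms \emph{other} than $a_{i}$, so that $a_{i}$ may be altered by multiples of $|d_{i}|$ without changing the index.

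For 1), each of the three claimed equalities has the same shape: a four-term composition in which one extra term equals an integer multiple $\alpha|d_{i}|$ of the number $|d_{i}|$ attached to its own position $i$. First I would compute these $d_{i}$. For $(\alpha(a+b+c),a,b,c)$, indexed $1,\dots,4$, one finds $d_{1}=-(a+b+c)$, so $|d_{1}|=a+b+c$ and the first term is $\alpha|d_{1}|$; for $(a,\alpha|a-b-c|,b,c)$ one gets $d_{2}=a-(b+c)=a-b-c$, so the second term is $\alpha|d_{2}|$; for $(a,b,\alpha|a+b-c|,c)$ one gets $d_{3}=(a+b)-c=a+b-c$, so the third term is $\alpha|d_{3}|$. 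Applying th\'eor\`eme \ref{cor1} 1) at the relevant position with the multiplier $-\alpha$ (legitimate since $\alpha|d_{i}|-\alpha|d_{i}|=0\geq 0$) replaces the extra term by $0$; dropping this null term by the convention fixed in the introduction yields $\chi[\mathfrak{p}(a,b,c)]$ in all three cases.

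The only point needing care is that th\'eor\`eme \ref{cor1} 1) presumes $d_{i}\neq 0$. For the second and third equalities this fails precisely when $a=b+c$, respectively $a+b=c$; but then the extra term $\alpha|d_{i}|=0$ already vanishes, so after removing the null entry the composition literally equals $(a,b,c)$ and the identity is trivial. This disposes of the degenerate cases and completes 1).

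For 2), the hypothesis $(a+b)\wedge(b+c)=1$ gives $\chi[\mathfrak{p}(a,b,c)]=1$ by th\'eor\`eme \ref{thm2} 2), whence by 1) the three compositions $(\alpha(a+b+c),a,b,c)$, $(a,\alpha|a-b-c|,b,c)$ and $(a,b,\alpha|a+b-c|,c)$ all have index $1$. For the remaining composition I would first reverse it via lemme \ref{lem1} 2), giving $\chi[\mathfrak{p}(a,b,c,\alpha(a+b+c))]=\chi[\mathfrak{p}(\alpha(a+b+c),c,b,a)]$, and then reduce the leading term modulo $a+b+c$ exactly as in the first equality of 1), obtaining $\chi[\mathfrak{p}(c,b,a)]=(b+c)\wedge(a+b)=1$. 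Thus all five parabolics have index $1$. Finally, since each parabolic is the biparabolic $\mathfrak{q}(\underline{a}\mid n)$, the remark following th\'eor\`eme \ref{thm1} gives $\chi[\mathfrak{p}_{s}(\cdots)]=\chi[\mathfrak{p}(\cdots)]-1=0$, so each $\mathfrak{p}_{s}$ is a Frobenius subalgebra of the corresponding $\mathfrak{sl}$. There is no genuinely hard step; the work is entirely in correctly bookkeeping the $d_{i}$ and in the trivial degenerate cases noted above.
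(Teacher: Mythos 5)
Your proof is correct and follows exactly the route the paper intends: the paper states this corollary without proof as an immediate consequence of th\'eor\`eme \ref{cor1} (noting that $d_{i}$ does not depend on $a_{i}$), th\'eor\`eme \ref{thm2}, lemme \ref{lem1} and the remark on passing to $\mathfrak{sl}(n)$, which is precisely what you assembled. Your explicit treatment of the degenerate cases $a=b+c$ and $a+b=c$ (where $d_{i}=0$ but the extra term already vanishes) is a welcome detail the paper leaves implicit.
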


\begin{lem}\label{lem0} Pour tout $r\in\mathbb{N}^{\times}$ et $(\alpha_{1},\ldots,\alpha_{r})\in{\color{black}(\mathbb{N}^{\times})^{r}}$, on pose $a_{0}=1$, $a_{i}=\alpha_{i}(a_{0}+\dots+a_{i-1}),\;1\leq i\leq r$ et $n=a_{0}+\dots+a_{r}$. Alors $\mathfrak{p}_{s}(a_{0},\ldots,a_{r})$ est une sous-alg\`ebre de Frobenius de $\mathfrak{sl}(n)$.  
\end{lem}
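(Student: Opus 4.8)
The plan is to reduce the Frobenius assertion to a statement about the index of the ambient parabolic. Since $\mathfrak{p}(a_0,\ldots,a_r)$ is in particular biparabolic, the remark following Theorem~\ref{thm1} gives $\chi[\mathfrak{p}_s(a_0,\ldots,a_r)]=\chi[\mathfrak{p}(a_0,\ldots,a_r)]-1$, so $\mathfrak{p}_s(a_0,\ldots,a_r)$ is a Frobenius subalgebra of $\mathfrak{sl}(n)$ if and only if $\chi[\mathfrak{p}(a_0,\ldots,a_r)]=1$. Thus it suffices to prove the latter equality.

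First I would record the arithmetic of the composition. Writing $S_i=a_0+\cdots+a_i$, the defining relation $a_i=\alpha_i S_{i-1}$ yields $S_i=(1+\alpha_i)S_{i-1}$ with $S_0=a_0=1$, so every partial sum is a positive integer. The decisive observation is that the \emph{last} part $a_r=\alpha_r S_{r-1}$ is an exact multiple of $S_{r-1}$, and $S_{r-1}$ is precisely the sum $a_0+\cdots+a_{r-1}$ of all the preceding parts.

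Next I would apply the reduction Theorem~\ref{cor1} to the final entry of $(a_0,\ldots,a_r)$. For that entry the associated quantity is $|d|=a_0+\cdots+a_{r-1}=S_{r-1}>0$, so $d\neq 0$ and part~1) applies; moreover $a_r=\alpha_r S_{r-1}\equiv 0\pmod{S_{r-1}}$, so the ``in particular'' form of part~1) gives
$$\chi[\mathfrak{p}(a_0,\ldots,a_{r-1},a_r)]=\chi[\mathfrak{p}(a_0,\ldots,a_{r-1},0)]=\chi[\mathfrak{p}(a_0,\ldots,a_{r-1})],$$
the last equality being the convention that deletes null terms. The key point is that the truncated composition $(a_0,\ldots,a_{r-1})$ is again of the prescribed form, with parameters $(\alpha_1,\ldots,\alpha_{r-1})$ and the same $a_0=1$, since the values $a_1,\ldots,a_{r-1}$ and the partial sums $S_1,\ldots,S_{r-1}$ are unchanged.

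Then I would close by induction on $r$. The base case $r=1$ reads $\chi[\mathfrak{p}(1,\alpha_1)]=1\wedge\alpha_1=1$ by Theorem~\ref{thm2}~1), while the inductive step is exactly the reduction displayed above. Hence $\chi[\mathfrak{p}(a_0,\ldots,a_r)]=1$ for every $r$, which is the desired Frobenius property. I do not expect a genuine obstacle here: the only points demanding care are the correct identification of $|d|=S_{r-1}$ for the last part (so that part~1) of Theorem~\ref{cor1} is legitimately applicable, $d\neq 0$) and the verification that suppressing the newly created null term leaves a composition of the same recursive shape, so that the induction hypothesis can be invoked.
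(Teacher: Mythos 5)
Your proof is correct and follows essentially the same route as the paper: the paper's own argument is precisely the observation that $a_{i}[a_{0}+\dots+a_{i-1}]=0$ for $1\leq i\leq r$, followed by iterated application of Theorem~\ref{cor1} to kill the last part, which is exactly your induction. The only (immaterial) difference is that you close the base case $r=1$ via Theorem~\ref{thm2}, where the paper would simply apply Theorem~\ref{cor1} once more to reduce $\mathfrak{p}(1,\alpha_{1})$ to $\mathfrak{p}(1)$.
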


\begin{proof}
Il suffit de remarquer que $a_{i}[a_{0}+\dots+a_{i-1}]=0,\;1\leq i\leq r$. Le r\'esultat d\'ecoule du th\'eor\`eme \ref{cor1}.
\end{proof}

\begin{lem}\label{lem25}
Soit $(a_{1},\cdots,a_{k})$ une composition d'un entier $n\in\mathbb{N}^{\times}$. Pour tout $\alpha\in\mathbb{N}^{\times}$, on a :
$$\chi[\mathfrak{p}(\alpha a_{1},\cdots,\alpha a_{k})]=\alpha\chi[\mathfrak{p}(a_{1},\cdots,a_{k})]$$
En particulier, $(a_{1}\wedge\ldots\wedge a_{k})$ divise $\chi[\mathfrak{p}(a_{1},\cdots,a_{k})]$.

\end{lem}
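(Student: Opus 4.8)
The plan is to prove the homogeneity identity $\chi[\mathfrak{p}(\alpha a_{1},\dots,\alpha a_{k})]=\alpha\,\chi[\mathfrak{p}(a_{1},\dots,a_{k})]$ by strong induction on $n=a_{1}+\cdots+a_{k}$, running the reduction algorithm provided by Theorem \ref{cor1} together with the termination guarantee of Lemma \ref{lemP}. The guiding observation is that every quantity which governs a reduction step is homogeneous of degree one in the $a_{i}$: with $d_{i}(\underline{a})$ denoting the integers of Theorem \ref{cor1}, one has $d_{i}(\alpha\underline{a})=\alpha\,d_{i}(\underline{a})$, so the alternatives "$d_{i}=0$" and "$a_{i}\geq|d_{i}|$" hold for $\underline{a}$ exactly when they hold for $\alpha\underline{a}$. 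This is what makes the whole algorithm $\alpha$-equivariant, and it is really the only point that needs care.

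For the base of the induction, if $k=1$ then $\mathfrak{p}(a_{1})=\mathfrak{gl}(a_{1})$ and $\chi[\mathfrak{gl}(m)]=m$ (in the meander $\Gamma(a_{1})$ one has $\theta_{\underline{a}}=\theta_{n}$, and Theorem \ref{thm1} gives $\chi=a_{1}$), whence $\chi[\mathfrak{p}(\alpha a_{1})]=\alpha a_{1}=\alpha\,\chi[\mathfrak{p}(a_{1})]$; in particular the case $n=1$ is settled. Now suppose $n>1$ and $k\geq 2$, and assume the identity for every composition of total sum $<n$. Since each $a_{i}>0$, Lemma \ref{lemP} furnishes an index $i$ with $a_{i}\geq|d_{i}|$; fix the smallest such $i$. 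I then apply the matching reduction of Theorem \ref{cor1} simultaneously to $\underline{a}$ and to $\alpha\underline{a}$, which is legitimate because the defining inequality is scale-invariant.

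If $d_{i}=0$, part 2) of Theorem \ref{cor1} gives
$$\chi[\mathfrak{p}(\alpha\underline{a})]=\alpha a_{i}+\chi[\mathfrak{p}(\alpha a_{1},\dots,\alpha a_{i-1},\alpha a_{i+1},\dots,\alpha a_{k})].$$
The composition $(a_{1},\dots,a_{i-1},a_{i+1},\dots,a_{k})$ has total sum $n-a_{i}<n$, so the induction hypothesis together with part 2) applied to $\underline{a}$ yields $\chi[\mathfrak{p}(\alpha\underline{a})]=\alpha a_{i}+\alpha\,\chi[\mathfrak{p}(a_{1},\dots,a_{i-1},a_{i+1},\dots,a_{k})]=\alpha\,\chi[\mathfrak{p}(\underline{a})]$. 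If instead $d_{i}\neq 0$, then $|d_{i}|\geq 1$ and $a_{i}\geq|d_{i}|$ force $a_{i}[|d_{i}|]<a_{i}$; using $\bigl(\alpha a_{i}\bigr)[\alpha|d_{i}|]=\alpha\,\bigl(a_{i}[|d_{i}|]\bigr)$, the "in particular" form of part 1) gives
$$\chi[\mathfrak{p}(\alpha\underline{a})]=\chi[\mathfrak{p}(\alpha a_{1},\dots,\alpha\bigl(a_{i}[|d_{i}|]\bigr),\dots,\alpha a_{k})].$$
The composition $(a_{1},\dots,a_{i}[|d_{i}|],\dots,a_{k})$ has total sum $<n$ (after deleting a possibly produced zero term, which is harmless since $\alpha\cdot 0=0$ and the convention on zero parts respects scaling), so the right-hand side equals $\alpha\,\chi[\mathfrak{p}(a_{1},\dots,a_{i}[|d_{i}|],\dots,a_{k})]=\alpha\,\chi[\mathfrak{p}(\underline{a})]$ by the induction hypothesis and part 1). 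This closes the induction.

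The only thing to monitor is that each reduction is genuinely $\alpha$-equivariant and strictly decreases $n$; both are ensured by the homogeneity of the $d_{i}$ and by Lemma \ref{lemP}, so beyond this bookkeeping there is no real obstacle. Finally, to deduce the divisibility statement, set $g=a_{1}\wedge\cdots\wedge a_{k}$ and write $a_{i}=g\,b_{i}$ where $(b_{1},\dots,b_{k})$ is a composition of $n/g$; the identity just proved gives $\chi[\mathfrak{p}(a_{1},\dots,a_{k})]=g\,\chi[\mathfrak{p}(b_{1},\dots,b_{k})]$, which is a multiple of $g$, so $g$ divides $\chi[\mathfrak{p}(a_{1},\dots,a_{k})]$.
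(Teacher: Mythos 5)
Your proposal is correct and follows essentially the same route as the paper: induction on $n$, using Lemma \ref{lemP} to locate an index $i$ with $a_{i}\geq|d_{i}|$, then applying the two reduction cases of Theorem \ref{cor1} simultaneously to $\underline{a}$ and $\alpha\underline{a}$, which is legitimate precisely because of the homogeneity $d_{i}(\alpha\underline{a})=\alpha d_{i}(\underline{a})$ and $(\alpha a_{i})[\alpha|d_{i}|]=\alpha\bigl(a_{i}[|d_{i}|]\bigr)$. Your write-up merely makes explicit the $\alpha$-equivariance and strict decrease of $n$ that the paper leaves implicit.
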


\begin{proof}
On a $\chi[\mathfrak{p}(\alpha)]=\alpha=\alpha\chi[\mathfrak{p}(1)]$, d'o\`u le r\'esultat pour $n=1$. Soient $d_{k}=-(a_{1}+\cdots+a_{k-1})$ et $d_{i}=(a_{1}+\cdots+a_{i-1})-(a_{i+1}+\cdots+a_{k})$, $1\leq i\leq k-1$. D'apr\`es le lemme \ref{lemP}, il existe $1\leq i\leq k$ v\'erifiant $a_{i}\geq|d_{i}|$. Supposons que $d_{i}=0$, alors $\chi[\mathfrak{p}(\alpha a_{1},\cdots,\alpha a_{k})]=\alpha a_{i}+\chi[\mathfrak{p}(\alpha a_{1},\cdots,\alpha a_{i-1},\alpha a_{i+1},\cdots,\alpha a_{k})]$, sinon, d'apr\`es le th\'eor\`eme \ref{cor1}, on a $\chi[\mathfrak{p}(\alpha a_{1},\cdots,\alpha a_{k})]=\chi[\mathfrak{p}(\alpha a_{1},\cdots,\alpha a_{i-1},\alpha(a_{i}[|d_{i}|]),\alpha a_{i+1},\cdots,\alpha a_{k})]$
 
Le r\'esultat s'ensuit alors par r\'ecurrence sur $n$. 
\end{proof}

\begin{lem}\label{lem0^{'}}
Pour tous $a,k\in\mathbb{N}$ tel que $a>1$, on a :
$$\chi[\mathfrak{p}(1,a,a^{2},\cdots,a^{k})]=\frac{a^{[\frac{k}{2}]+1}-1}{a-1}$$
\end{lem}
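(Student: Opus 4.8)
The plan is to turn the computation into a two-step recursion in $k$ and then solve it by induction. Write $f(k)=\chi[\mathfrak{p}(1,a,a^{2},\ldots,a^{k})]$, so the composition has $k+1$ parts $a^{0},a^{1},\ldots,a^{k}$ of $n=\frac{a^{k+1}-1}{a-1}$, and set $g(k)=\frac{a^{[\frac{k}{2}]+1}-1}{a-1}=1+a+\cdots+a^{[\frac{k}{2}]}$ for the target value. I would first establish $f(k)=1+a\,f(k-2)$ for $k\geq 2$, then check that $g$ satisfies the same recursion with matching base cases, and conclude by induction.

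The first move is to reduce the last part using Theorem \ref{cor1}. For the last index (position $k+1$) one has $d_{k+1}=-(a^{0}+a^{1}+\cdots+a^{k-1})=-\frac{a^{k}-1}{a-1}$, hence $|d_{k+1}|=1+a+\cdots+a^{k-1}$. The arithmetic fact driving the whole proof is the Euclidean division $a^{k}=(a-1)\,(1+a+\cdots+a^{k-1})+1$, which says precisely that $a^{k}[\,|d_{k+1}|\,]=1$, the remainder being legitimate since $|d_{k+1}|>1$ for $k\geq 1$. Thus the ``in particular'' clause of Theorem \ref{cor1}(1) gives $f(k)=\chi[\mathfrak{p}(1,a,\ldots,a^{k-1},1)]$.

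The second move is to split at the two equal extreme parts. The reduced composition $(1,a,a^{2},\ldots,a^{k-1},1)$ has first and last parts both equal to $1$, so taking $i=1$ and $i'=k+1$ the hypothesis $a_{1}=a_{i'}+\cdots+a_{k+1}$ of Lemma \ref{lem1}(5) holds, and $i+1<i'$ as soon as $k\geq 2$. This yields
$$f(k)=\chi[\mathfrak{p}(1,1)]+\chi[\mathfrak{p}(a,a^{2},\ldots,a^{k-1})].$$
By Theorem \ref{thm2} the first summand is $1\wedge 1=1$, and factoring out $a$ via Lemma \ref{lem25} gives $\chi[\mathfrak{p}(a,a^{2},\ldots,a^{k-1})]=a\,\chi[\mathfrak{p}(1,a,\ldots,a^{k-2})]=a\,f(k-2)$. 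Hence $f(k)=1+a\,f(k-2)$ for every $k\geq 2$.

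It remains to close the induction. The base cases are $f(0)=\chi[\mathfrak{p}(1)]=1=g(0)$ and $f(1)=\chi[\mathfrak{p}(1,a)]=1\wedge a=1=g(1)$. Since $[\frac{k-2}{2}]=[\frac{k}{2}]-1$, one checks immediately that $1+a\,g(k-2)=1+(a+\cdots+a^{[\frac{k}{2}]})=g(k)$, so $g$ obeys the same two-step recursion; induction on $k$ (the two base cases covering the even and odd residues) then forces $f(k)=g(k)$, which is the claimed formula. I expect the only genuine obstacle to be the very first move: one must recognize that $a^{k}\equiv 1\pmod{1+a+\cdots+a^{k-1}}$ and that a single reduction of the top block, followed by the end-to-end splitting, regenerates a rescaled copy of the original problem $\mathfrak{p}(1,a,\ldots,a^{k-2})$; once that self-similarity is seen, the rest is routine bookkeeping with the recursion.
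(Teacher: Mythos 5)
Your proof is correct and follows essentially the same route as the paper: the same reduction of the last block $a^{k}$ to $1$ via Theorem \ref{cor1}, the same splitting off of the two extreme parts and rescaling by $a$ (Lemma \ref{lem1}(5) and Lemma \ref{lem25}, which the paper uses implicitly while citing only Theorem \ref{cor1}), yielding the identical recursion $f(k)=1+a\,f(k-2)$ solved by induction. The only quibble is your parenthetical claim that $|d_{k+1}|>1$ for $k\geq 1$ (it equals $1$ when $k=1$), but this is harmless since you invoke the reduction only for $k\geq 2$, where it does hold.
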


\begin{proof}
On a $\chi[\mathfrak{p}(1)]=\chi[\mathfrak{p}(1,a)]=1$, si bien que le r\'esultat est vraie pour $k=0$ et $k=1$. Supposons que $k\geq 2$, d'apr\`es le th\'eor\`eme \ref{cor1}, on a :
\begin{align*}
\chi[\mathfrak{p}(1,a,a^{2},\cdots,a^{k})]&=\chi[\mathfrak{p}(1,a,a^{2},\cdots,a^{k-1},1)]\\
&=1+\chi[\mathfrak{p}(a,a^{2},\cdots,a^{k-1})]\\
&=1+a\chi[\mathfrak{p}(1,a,a^{2},\cdots,a^{k-2})]\\
\end{align*}
Le r\'esultat s'obtient alors par r\'ecurrence sur $k$.
\end{proof}

\begin{lem}\label{lem3}
Soient $(a_{1},\ldots,a_{k})$ une composition de n, $d_{k}=-(a_{1}+\cdots+a_{k-1})$ et $d_{i}=(a_{1}+\cdots+a_{i-1})-(a_{i+1}+\cdots+a_{k})$, $1\leq i\leq k$.

\begin{itemize}
 \item[1)] Supposons $a_{i}+d_{i}\geq 0$ pour $2\leq i\leq k-1$, alors :
$$\chi[\mathfrak{p}(a_{1},\ldots,a_{k})]=\chi[\mathfrak{p}(a_{i}+d_{i},a_{i+1},\ldots,a_{k},a_{2},\ldots,a_{i})]$$

\item[2)] Supposons $a_{1}+d_{1}\leq 0$, alors :

$$\chi[\mathfrak{p}(a_{1},\ldots,a_{k})]=\chi[\mathfrak{p}(a_{2},\ldots,a_{k},|a_{1}+d_{1}|)]$$

\item[3)] Supposons $a_{k}+d_{k}\leq 0$, alors :

$$\chi[\mathfrak{p}(a_{1},\ldots,a_{k})]=\chi[\mathfrak{p}(|a_{k}+d_{k}|,a_{1},\ldots,a_{k-1})]$$
\end{itemize}
\end{lem}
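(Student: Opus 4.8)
The plan is to prove all three parts by a single device: rewrite \emph{both} the left-hand and the right-hand parabolic index as the index of one and the same biparabolic algebra, using only the elementary moves of Lemma~\ref{lem1} — the left--right swap (part 1), the simultaneous reversal of both compositions (part 2), and the passage \(\mathfrak p\leftrightarrow\mathfrak q\) (part 4). The equality of the two parabolics then becomes the tautology that a biparabolic equals itself. Throughout I use the convention, stated in the introduction, that a zero block may be freely inserted or deleted.

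First I would treat part 2. Set \(t_1=a_1+d_1=a_1-(a_2+\cdots+a_k)\le 0\), so \(|a_1+d_1|=-t_1\). Applying part 4 of Lemma~\ref{lem1} at the cut \(i=1\) (the branch \(t_1<0\); the case \(t_1=0\) only adds a zero block to be deleted) gives \(\chi[\mathfrak p(a_1,\dots,a_k)]=\chi[\mathfrak q(a_1,|a_1+d_1|\mid a_k,\dots,a_2)]\). Swapping the two sides (part 1) and reversing both compositions (part 2) turns this into \(\chi[\mathfrak q(a_2,\dots,a_k\mid |a_1+d_1|,a_1)]\). Starting instead from the target \(\mathfrak p(a_2,\dots,a_k,|a_1+d_1|)\) and cutting at position \(k-1\), the value produced by part 4 is
\[
(a_2+\cdots+a_k)-|a_1+d_1|=(a_2+\cdots+a_k)+t_1=a_1\ge 0,
\]
so that same part 4 rewrites the target as \(\chi[\mathfrak q(a_2,\dots,a_k\mid |a_1+d_1|,a_1)]\). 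The two biparabolics are identical, which is the claim. Part 3 follows immediately: reversing the composition sends \((a_1,\dots,a_k)\) to \((a_k,\dots,a_1)\), whose leading entry \(a_k\) has \(a_k-(a_{k-1}+\cdots+a_1)=a_k+d_k\le 0\), so part 2 applies, and reversing back gives the stated identity.

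For part 1, fix \(i\) with \(2\le i\le k-1\) and put \(t_i=a_i+d_i=(a_1+\cdots+a_i)-(a_{i+1}+\cdots+a_k)\ge 0\). Part 4 of Lemma~\ref{lem1} at the cut \(i\) gives \(\chi[\mathfrak p(a_1,\dots,a_k)]=\chi[\mathfrak q(a_1,\dots,a_i\mid a_k,\dots,a_{i+1},t_i)]\); swapping sides and reversing both compositions turns this into \(\chi[\mathfrak q(t_i,a_{i+1},\dots,a_k\mid a_i,\dots,a_1)]\). Starting from the target \(\mathfrak p(t_i,a_{i+1},\dots,a_k,a_2,\dots,a_i)\) and cutting just after the block \(a_k\) (position \(k-i+1\)), the value produced by part 4 is
\[
\big(t_i+a_{i+1}+\cdots+a_k\big)-\big(a_2+\cdots+a_i\big)=(a_1+\cdots+a_i)-(a_2+\cdots+a_i)=a_1\ge 0,
\]
so part 4 rewrites the target as \(\chi[\mathfrak q(t_i,a_{i+1},\dots,a_k\mid a_i,a_{i-1},\dots,a_1)]\), the very same biparabolic. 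Hence the two indices coincide.

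The reversals and swaps are routine bookkeeping; the one point that genuinely has to be verified is that the cut chosen on the \emph{target} side yields the correct ``\(t\)''-value — always \(a_1\) here, forced by the telescoping of the partial sums — so that its biparabolic matches the one coming from the left. This, together with the sign hypotheses (\(a_i+d_i\ge 0\) in part 1, \(\le 0\) in parts 2 and 3) guaranteeing that the blocks \(t_i\) and \(|a_1+d_1|\) are genuine non-negative parts, and the consistent deletion of any zero block when \(t_i=0\) or \(|a_1+d_1|=0\) so that the two reductions still land on the same algebra, is where the content lies. Once that matching is checked the equality is immediate, the underlying invariance of the index under these moves being exactly Lemma~\ref{lem1} (itself a consequence of Theorem~\ref{thm1}).
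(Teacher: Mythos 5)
Your proof is correct, and its first half is exactly the paper's: both arguments apply part 4) of Lemma~\ref{lem1} to $\mathfrak{p}(a_{1},\ldots,a_{k})$ and then the swap and reversal of parts 1) and 2) to reach the biparabolic $\mathfrak{q}(a_{i}+d_{i},a_{i+1},\ldots,a_{k}\mid a_{i},\ldots,a_{1})$ (resp. $\mathfrak{q}(a_{2},\ldots,a_{k}\mid |a_{1}+d_{1}|,a_{1})$ for part 2), and both deduce part 3) from part 2) by reversal. The endgames differ. The paper returns to the parabolic side via part 3) of Lemma~\ref{lem1}, obtaining $\mathfrak{p}(a_{i}+d_{i},a_{i+1},\ldots,a_{k},a_{1},\ldots,a_{i})$ --- the desired composition with a redundant block $a_{1}$ --- and then erases that block by Theorem~\ref{cor1}: the $d$-value of that block is precisely $a_{1}$ (the same telescoping you compute), so the modular reduction $a_{1}[a_{1}]=0$ deletes it. You never go back to the parabolic side: instead you apply part 4) of Lemma~\ref{lem1} a second time, to the \emph{target} parabolic, verify that the closing block it produces is again $a_{1}$, and conclude by matching the two identical biparabolics. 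The two proofs exploit the same arithmetic fact, but yours stays entirely inside Lemma~\ref{lem1}, with no appeal to Theorem~\ref{cor1} (hence none to Lemma~\ref{lem2}, on which that theorem rests), which makes it slightly more self-contained; the paper's one-directional chain has the mild advantage of displaying the redundant block explicitly. Your handling of the degenerate cases ($t_{i}=0$ or $|a_{1}+d_{1}|=0$) through the zero-block deletion convention is consistent with the paper's own usage.
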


\begin{proof}

\begin{itemize}
 \item[1)] D'apr\`es le lemme \ref{lem1}, on a :
 \begin{align*} \chi[\mathfrak{p}(a_{1},\ldots,a_{k})]&=\chi[\mathfrak{q}(a_{1},\ldots,a_{i}\mid a_{k},\ldots,a_{i+1},a_{i}+d_{i})]\\
&=\chi[\mathfrak{q}(a_{i}+d_{i},a_{i+1},\ldots,a_{k}\mid a_{i},\ldots,a_{1})]\\
&=\chi[\mathfrak{p}(a_{i}+d_{i},a_{i+1},\ldots,a_{k},a_{1},\ldots,a_{i})]\\
\end{align*}
Il r\'esulte du th\'eor\`eme \ref{cor1} que :
$$\chi[\mathfrak{p}(a_{i}+d_{i},a_{i+1},\ldots,a_{k},a_{1},\ldots,a_{i})]=
\chi[\mathfrak{p}(a_{i}+d_{i},a_{i+1},\ldots,a_{k},a_{2},\ldots,a_{i})]$$

 \item[2)] D'apr\`es le lemme \ref{lem1}, on a :
 \begin{align*}\chi[\mathfrak{p}(a_{1},\ldots,a_{k})]&=\chi[\mathfrak{q}(a_{1},|a_{1}+d_{1}|\mid a_{k},\ldots,a_{2})]\\
&=\chi[\mathfrak{q}(|a_{1}+d_{1}|,a_{1}\mid a_{2},\ldots,a_{k})]\\
&=\chi[\mathfrak{p}(|a_{1}+d_{1}|,a_{1},a_{k},a_{k-1},\ldots,a_{2})]\\
\end{align*}
Il r\'esulte du th\'eor\`eme \ref{cor1} que :
$$\chi[\mathfrak{p}(|a_{1}+d_{1}|,a_{1},a_{k},a_{k-1},\ldots,a_{2})]=\chi[\mathfrak{p}(|a_{1}+d_{1}|,a_{k},a_{k-1},\ldots,a_{2})]$$
Enfin, on a :
$$\chi[\mathfrak{p}(|a_{1}+d_{1}|,a_{k},a_{k-1},\ldots,a_{2})]=\chi[\mathfrak{p}[(a_{2},\ldots,a_{k},||a_{1}+d_{1}|)]$$

\item[3)] R\'esulte de 2) et du fait que $\chi[\mathfrak{p}(a_{1},\ldots,a_{k})]=\chi[\mathfrak{p}(a_{k},\ldots,a_{1})]$.
\end{itemize}

\end{proof}

\begin{cor}\label{cor3}
 Soient $\underline{a}=(a_{1},\ldots,a_{k})$, $\underline{b}=(b_{1},\ldots,b_{t})$ deux compositions de n, $\tilde{a}_{i}=a_{i}+\cdots+a_{k}$, $1<i\leq k$, et $\tilde{b}_{i}=b_{1}+\cdots+b_{i}$, $1\leq i\leq t$. On a :\\
 \begin{enumerate}
\item[1)]Pour tout $1<i\leq k$,
{\color{black} $$
\chi[\mathfrak{p}(\underline{a})]=\begin{cases}\chi[\mathfrak{p}(a_{1}-2\tilde{a}_{i},a_{i},\ldots,a_{k},a_{2},\ldots,a_{i-1})]\;si\;a_{1}\geq 2\tilde{a}_{i}\\
\chi[\mathfrak{p}(a_{i},\ldots,a_{k},a_{2},\ldots,a_{i-1},2\tilde{a}_{i}-a_{1})]\;si\;a_{1}<2\tilde{a}_{i}\\
 \end{cases}$$}
 \item[2)]Pour tout $1\leq i\leq t$,
{\color{black} $$\chi[\mathfrak{q}(\underline{a}\mid\underline{b})]=\begin{cases}\chi[\mathfrak{q}(a_{1}-2\tilde{b}_{i},b_{i},\ldots,b_{1},a_{2},\ldots,a_{k}\mid b_{i+1},\ldots,b_{t})]\;si\;a_{1}\geq 2\tilde{b}_{i}\\
\chi[\mathfrak{q}(b_{i},\cdots,b_{1},a_{2},\ldots,a_{k}\mid 2\tilde{b}_{i}-a_{1},b_{i+1},\ldots,b_{t})]\;si\;a_{1}<2\tilde{b}_{i}\\
 \end{cases}$$}
\end{enumerate}
\end{cor}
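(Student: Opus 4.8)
The plan is to deduce both statements from the reduction machinery already in place: the ``folding'' identities of Lemma \ref{lem1}, the modular/removal reductions of Theorem \ref{cor1}, and the cyclic reduction of Lemma \ref{lem3}. Part 2) will then follow from part 1) by passing through the associated parabolic of $\mathfrak{gl}(2n)$.

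For part 1), I would fix $1<i\le k$ and set $s=a_{1}+\cdots+a_{i-1}$, so that $\tilde{a}_{i}=n-s$ and $t:=t_{i-1}=s-\tilde{a}_{i}=2s-n$; note $-t=2\tilde{a}_{i}-n$ and $a_{1}-2\tilde{a}_{i}=t-(n-a_{1})$. First I would fold at the $(i-1)$-th cut using Lemma \ref{lem1} 4), and then reorganise the resulting biparabolic back into a parabolic by applying Lemma \ref{lem1} 1), 2), 3) in turn. When $t\ge 0$ this gives $\chi[\mathfrak{p}(\underline{a})]=\chi[\mathfrak{p}(t,a_{i},\ldots,a_{k},a_{1},a_{2},\ldots,a_{i-1})]$, and when $t<0$ it gives $\chi[\mathfrak{p}(\underline{a})]=\chi[\mathfrak{p}(a_{i},\ldots,a_{k},a_{1},a_{2},\ldots,a_{i-1},-t)]$. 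In either composition the part equal to $a_{1}$ has associated coefficient $d=a_{1}$ (a direct partial-sum computation), so by Theorem \ref{cor1} 1) with $\alpha=-1$ it may be replaced by $0$, i.e.\ suppressed; this deletes the middle $a_{1}$ and leaves the pivot part $t$ (resp.\ $-t$) at the front (resp.\ the back).

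It then remains to adjust that pivot part, the key computation being that it carries coefficient $|d|=n-a_{1}\neq 0$. If $a_{1}\ge 2\tilde{a}_{i}$ (which forces $t\ge 0$), then $t-(n-a_{1})=a_{1}-2\tilde{a}_{i}\ge 0$, so Theorem \ref{cor1} 1) with $\alpha=-1$ turns the front part $t$ into $a_{1}-2\tilde{a}_{i}$, yielding the first case. If $a_{1}<2\tilde{a}_{i}$ and $t\ge 0$, the front part satisfies $t+d=a_{1}-2\tilde{a}_{i}<0$ and Lemma \ref{lem3} 2) moves $2\tilde{a}_{i}-a_{1}$ to the end; if $a_{1}<2\tilde{a}_{i}$ and $t<0$, then Theorem \ref{cor1} 1) with $\alpha=+1$ raises the back part $-t=2\tilde{a}_{i}-n$ to $2\tilde{a}_{i}-a_{1}$. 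Both subcases give the second case, completing part 1); throughout I use the convention that null parts are deleted.

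For part 2), I would first rewrite $\chi[\mathfrak{q}(\underline{a}\mid\underline{b})]=\chi[\mathfrak{p}(a_{1},\ldots,a_{k},b_{t},\ldots,b_{1})]$ by Lemma \ref{lem1} 3), a parabolic of $\mathfrak{gl}(2n)$ with $k+t$ parts. The point is that the $j_{0}$-th part of this parabolic, with $j_{0}=k+t-i+1$, is $b_{i}$, and its tail sum is $b_{i}+\cdots+b_{1}=\tilde{b}_{i}$. Applying part 1) at the index $j_{0}$ therefore pivots $a_{1}$ against $2\tilde{b}_{i}$; reading off the two resulting compositions and folding them back into biparabolics via Lemma \ref{lem1} 3) gives exactly the two displayed cases. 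The main obstacle is purely bookkeeping: one must carry the block orders correctly through the chain Lemma \ref{lem1} 4)$\to$1)$\to$2)$\to$3), and verify the two coefficient computations ($d=a_{1}$ for the suppression and $|d|=n-a_{1}$ for the pivot) together with the sign conditions ensuring that each modified part stays nonnegative.
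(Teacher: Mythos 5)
Your proof is correct, and the key computations check out: in both unfolded compositions the block equal to $a_{1}$ carries coefficient $d=a_{1}$, the pivot block carries coefficient of absolute value $n-a_{1}$, the implication $a_{1}\geq 2\tilde{a}_{i}\Rightarrow t_{i-1}\geq 0$ holds, and your three cases are exhaustive; part 2) is also handled correctly. Your route differs from the paper's in its middle step. The paper first applies Theorem \ref{cor1} to replace $a_{1}$ by $n$, so that Lemma \ref{lem3} 1) applies at index $i-1$ with its nonnegativity hypothesis automatically satisfied, which yields $\chi[\mathfrak{p}(\underline{a})]=\chi[\mathfrak{p}(2n-a_{1}-2\tilde{a}_{i},a_{i},\ldots,a_{k},a_{2},\ldots,a_{i-1})]$ in one stroke and with no sign discussion; the end-game (Theorem \ref{cor1}, and Lemma \ref{lem3} 2) in the subcases of $a_{1}<2\tilde{a}_{i}$) is then the same as yours. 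You instead bypass Lemma \ref{lem3} 1) and rederive the cyclic rearrangement directly from Lemma \ref{lem1} 4), 1), 2), 3) followed by a Theorem \ref{cor1} deletion of the spurious block $a_{1}$ --- which is exactly the mechanism by which the paper proves Lemma \ref{lem3} 1) itself, so the two arguments rest on the same substance. The cost of your version is the sign split on $t_{i-1}$, which the paper's inflation trick avoids; the benefit is that it covers the boundary index $i=2$ uniformly, whereas Lemma \ref{lem3} 1) is only stated for $2\leq i\leq k-1$, so at $i=2$ the paper's citation is formally vacuous (there its intermediate composition degenerates to $\underline{a}$ itself and the case analysis still applies, but this deserves the remark). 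For part 2) the paper says only that it is an immediate consequence of 1) and Lemma \ref{lem1}; your bookkeeping --- passing to $\mathfrak{p}(a_{1},\ldots,a_{k},b_{t},\ldots,b_{1})$ in $\mathfrak{gl}(2n)$, applying 1) at $j_{0}=k+t-i+1$ where the tail sum equals $\tilde{b}_{i}$, and folding back by Lemma \ref{lem1} 3) --- supplies precisely the omitted details.
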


\begin{proof}

D'apr\`es le th\'eor\`eme \ref{cor1} et le 1) du lemme \ref{lem3}, on a :
\begin{align*}
\chi[\mathfrak{p}(\underline{a})]&=\chi[\mathfrak{p}(n,a_{2},\ldots,a_{k})]\\
&=\chi[\mathfrak{p}(2n-a_{1}-2\tilde{a}_{i},a_{i},\cdots ,a_{k},a_{2},\ldots,a_{i-1})]\\
\end{align*}
\begin{enumerate}
\item[1)]
Supposons $a_{1}\geq 2\tilde{a}_{i}$, alors $2n-a_{1}-2\tilde{a}_{i}\geq 2(n-a_{1})$. D'apr\`es le th\'eor\`eme \ref{cor1}, on a :
\begin{align*}
\chi[\mathfrak{p}(\underline{a})]&=\chi[\mathfrak{p}(2n-a_{1}-2\tilde{a}_{i}-2(n-a_{1}),a_{i},\ldots,a_{k},a_{2},\ldots,a_{i-1})]\\
&=\chi[\mathfrak{p}(a_{1}-2\tilde{a}_{i},a_{i},\ldots,a_{k},a_{2},\ldots,a_{i-1})]\\
\end{align*}

Supposons $a_{1}< 2\tilde{a}_{i}$, on distingue deux cas :

Si $n\geq 2\tilde{a}_{i}$, il suit du th\'eor\`eme \ref{cor1}, puis du lemme \ref{lem3} que l'on a :

\begin{align*}
\chi[\mathfrak{p}(\underline{a})]&=\chi[\mathfrak{p}(2n-a_{1}-2\tilde{a}_{i},a_{i},\ldots,a_{k},a_{2},\ldots,a_{i-1})]\\
&=\chi[\mathfrak{p}(2n-a_{1}-2\tilde{a}_{i}-(n-a_{1}),a_{i},\ldots,a_{k},a_{2},\ldots,a_{i-1})]\\
&=\chi[\mathfrak{p}(n-2\tilde{a}_{i},a_{i},\ldots,a_{k},a_{2},\ldots,a_{i-1})]\\
&=\chi[\mathfrak{p}(a_{i},\ldots,a_{k},a_{2},\ldots,a_{i-1},(n-a_{1})-(n-2\tilde{a}_{i}))]\\
&=\chi[\mathfrak{p}(a_{i},\ldots,a_{k},a_{2},\ldots,a_{i-1},2\tilde{a}_{i}-a_{1})]\\
\end{align*}

Si $n<2\tilde{a}_{i}$, il suit du lemme \ref{lem3}, puis du th\'eor\`eme \ref{cor1} que l'on a :

\begin{align*}
\chi[\mathfrak{p}(\underline{a})]&=\chi[\mathfrak{p}(2n-a_{1}-2\tilde{a}_{i},a_{i},\ldots,a_{k},a_{2},\ldots,a_{i-1})]\\
&=\chi[\mathfrak{p}(a_{i},\ldots,a_{k},a_{2},\ldots,a_{i-1},(n-a_{1})-(2n-a_{1}-2\tilde{a}_{i}))]\\
&=\chi[\mathfrak{p}(a_{i},\ldots,a_{k},a_{2},\ldots,a_{i-1},2\tilde{a}_{i}-n)]\\
&=\chi[\mathfrak{p}(a_{i},\ldots,a_{k},a_{2},\ldots,a_{i-1},2\tilde{a}_{i}-n+(n-a_{1}))]\\
&=\chi[\mathfrak{p}(a_{i},\ldots,a_{k},a_{2},\ldots,a_{i-1},2\tilde{a}_{i}-a_{1})]\\
\end{align*}
\item[2)]C'est une cons\'equense im\'ediate de 1) et du lemme \ref{lem1}.

\end{enumerate}
\end{proof}

\begin{theo}\label{thm3}
 Soient a et b deux entiers naturels non nuls, on a :
 
 $$\chi[\mathfrak{p}(a,a,a,b)]=\chi[\mathfrak{p}(a\wedge b,a,a)]=(a+(a\wedge b))\wedge (a-(a\wedge b))$$
 \end{theo}
\begin{proof}
 Comme $\chi[\mathfrak{p}(a,a,a,b)]=\chi[\mathfrak{p}(b,a,a,a)]$, il suit du lemme \ref{lem3} que l'on a :\\
$\chi[\mathfrak{p}(a,a,a,b)]=\chi[\mathfrak{p}(a,a,a,b[a])]$. On peut se ramener alors au cas $b<a$.\\
Mais alors, appliquant le lemme \ref{lem3} et le r\'esultat pr\'ec\'edent, on a :
\begin{align*} 
\chi[\mathfrak{p}(a,a,a,b)]&=\chi[\mathfrak{p}(3a-b,a,a,a)]\\
&=\chi[\mathfrak{p}(a-b,a,a,a)]\\
&=\chi[\mathfrak{p}(a,a,a,a-b)].\\
\end{align*}
On peut supposer alors que $2b<a$.

D'apr\`es le th\'eor\`eme \ref{cor1}, on a : 

\begin{align*}
 \chi[\mathfrak{p}(a,a,a,b)]&=\chi[\mathfrak{p}(a,a[b],a,b)]\\
&=\chi[\mathfrak{p}(a,a[b],b-a[b],b)]\\
&=\chi[\mathfrak{p}(a[2b],a[b],b-a[b],b)]\\
&=\chi[\mathfrak{p}(a[2b],a[b],a[2b],b)]\\
&=\chi[\mathfrak{p}(a[2b],a[2b],a[2b],b)]\\
 \end{align*}
 Supposons $a=1$, alors $\chi[\mathfrak{p}(1,1,1,b)]=\chi[\mathfrak{p}(1,1,1,b[3])]$. Or, $\chi[\mathfrak{p}(1,1,1)]=\chi[\mathfrak{p}(1,1,1,1)]=\chi[\mathfrak{p}(1,1,1,2)]=2$. Par suite, le r\'esultat est vrai pour $a=1$.\\
D'autre part on a : 
 
$(a[2b]-(a\wedge b))\wedge (a[2b]+(a\wedge b))=(a-(a\wedge b))\wedge (a+(a\wedge b))$

Le r\'esultat s'obtient alors par r\'ecurrence sur $a$ .
 
 \end{proof}

 \begin{cor}\label{cor4}
  
  \begin{itemize}
 \item[(1)] $\chi[\mathfrak{p}(a,a,a,b)]=\begin{cases} 2(a\wedge b)\;si\;a\;est\;impair \\
   (a\wedge b)\;si\;a\;est\;pair\;et\;b\;est\;impair \\                                      
  \end{cases}$
 \item[(2)] $\chi[\mathfrak{p}(a,a,a,b)]=1$ si et seulement si a est pair et $a\wedge b=1$ 
 \end{itemize}
 \end{cor}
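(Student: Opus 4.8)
The plan is to read off both parts directly from Theorem \ref{thm3}, which already reduces everything to a single gcd: it gives
$\chi[\mathfrak{p}(a,a,a,b)]=(a+(a\wedge b))\wedge(a-(a\wedge b))$.
So the whole corollary becomes an elementary number-theoretic computation. Throughout I would write $d:=a\wedge b$ and $m:=a/d$, which is an integer since $d\mid a$, and I would analyse $(a+d)\wedge(a-d)$ in terms of $m$.

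First I would pin down the two possible values of this gcd. Since $(a+d)+(a-d)=2a$ and $(a+d)-(a-d)=2d$, any common divisor of $a+d$ and $a-d$ divides $(2a)\wedge(2d)=2d$; conversely $d$ divides both $a+d$ and $a-d$. Hence $d$ divides $(a+d)\wedge(a-d)$, which in turn divides $2d$, so the value is either $d$ or $2d$. To decide which, I factor out $d$ and use $(a+d)\wedge(a-d)=d\cdot\big((m+1)\wedge(m-1)\big)$. Because $(m+1)-(m-1)=2$, the factor $(m+1)\wedge(m-1)$ is $1$ or $2$, and it equals $2$ precisely when $m+1$ and $m-1$ are both even, i.e.\ precisely when $m=a/d$ is odd. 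Thus the index is $2d$ if $a/d$ is odd and $d$ if $a/d$ is even.

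For part (1) I would translate the parity of $m=a/d$ into the parities of $a$ and $b$. If $a$ is odd, then $d\mid a$ forces $d$ odd, so $m=a/d$ is odd and the index is $2d=2(a\wedge b)$. If $a$ is even and $b$ is odd, then $d\mid b$ forces $d$ odd while $a$ is even, so $m=a/d$ is even and the index is $d=a\wedge b$; this gives the two lines of (1). For part (2) I would note that when $a/d$ is odd the index is $2d\geq 2$, so it can never equal $1$; hence $\chi=1$ forces $a/d$ even, in which case $\chi=d$, and $\chi=1$ is then equivalent to $d=1$. Finally, $a/d$ even together with $d=a\wedge b=1$ says exactly that $a$ is even with $a\wedge b=1$ (and note that $a\wedge b=1$ with $a$ even automatically forces $b$ odd), which settles the equivalence.

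I do not expect a real obstacle: the substance is carried by Theorem \ref{thm3}, and what remains is the clean identity $(a+d)\wedge(a-d)=d\big((m+1)\wedge(m-1)\big)$ together with routine parity bookkeeping. The only point needing a little care is tracking which of $a$, $b$, $d$ are forced odd in each case, and, for part (2), correctly excluding the situation where $a$ and $b$ are both even (there $d\geq 2$, so $\chi\geq 2$ and the value $1$ cannot occur).
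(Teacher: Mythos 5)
Your proof is correct and takes essentially the same route as the paper: Corollaire \ref{cor4} is stated there without a separate proof, precisely because it is an immediate consequence of Th\'eor\`eme \ref{thm3}, and your computation $(a+d)\wedge(a-d)=d\bigl((m+1)\wedge(m-1)\bigr)$ with $d=a\wedge b$, $m=a/d$, followed by the parity bookkeeping, is exactly the deduction the paper leaves to the reader. Both parts check out, including the correct exclusion in (2) of the case where $a$ and $b$ are both even.
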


 \begin{theo}\label{thm4} Soient $a,b,c,d\in\mathbb{N}^{\times}$, on a :
 
 \begin{enumerate}
 \item[1)]Supposons $(a+b+c)\wedge(b+c+d)\geq b+c$, alors :
 $$\chi[\mathfrak{p}(a,b,c,d)]=[(a+b+c)\wedge(b+c+d)]-(b+c)+(b\wedge c)$$
 \item[2)]Supposons $(a+b+c)\wedge(c+d)\geq a+b$, alors :
 $$\chi[\mathfrak{p}(a,b,c,d)]=[(a+b+c)\wedge(c+d)]-(a+b)+(a\wedge b)$$
\item[3)]Supposons $(a+b)\wedge(b+c+d)\geq c+d$, alors :
 $$\chi[\mathfrak{p}(a,b,c,d)]=[(a+b)\wedge(b+c+d)]-(c+d)+(c\wedge d)$$
 \item[4)]Supposons $a>d$ et $(b+a-d)\wedge(b+c)\geq a-d$, alors :
 $$\chi[\mathfrak{p}(a,b,c,d)]=[(b+a-d)\wedge(b+c)]-(a-d)+(a\wedge d)$$
 \end{enumerate}
 \end{theo}

 \begin{proof}
 \begin{enumerate}
  \item[1)] soient $a^{'}=a+b+c,\;d^{'}=b+c+d$ et $p=a^{'}\wedge d^{'}$. On a 
  $$\chi[\mathfrak{p}(a,b,c,d)]=\chi[\mathfrak{p}(a^{'}-(b+c),b,c,d^{'}-(b+c))]$$
Supposons qu'il existe $\alpha\in\mathbb{N}$ tel que $a^{'}-\alpha d^{'}\geq b+c$, il r\'esulte du th\'eor\`eme \ref{cor1} que 
     $$\chi[\mathfrak{p}(a,b,c,d)]=\chi[\mathfrak{p}(a^{'}-\alpha d^{'}-(b+c),b,c,d^{'}-(b+c))]$$
  De m\^eme, s'il existe $\alpha\in\mathbb{N}$ tel que $d^{'}-\alpha a^{'}\geq b+c$, on a 
  $$\chi[\mathfrak{p}(a,b,c,d)]=\chi[\mathfrak{p}(a^{'}-(b+c),b,c,d^{'}-\alpha a^{'}-(b+c))]$$
  Or, par hypoth\`se, on a $p\geq b+c$. Ainsi, si on applique successivement le 1) du th\'eor\`eme \ref{cor1}, on se ram\`ene au cas :  
  $$\chi[\mathfrak{p}(a,b,c,d)]=\chi[\mathfrak{p}(p-(b+c),b,c,p-(b+c))]$$
 Maintenant, si on utilise le 5) du lemme \ref{lem1}, on obtient
   $$\chi[\mathfrak{p}(a,b,c,d)]=\chi[\mathfrak{p}(p-(b+c),p-(b+c))]+\chi[\mathfrak{p}(b,c)]$$
 Il suit alors du th\'eor\`eme \ref{thm2} que l'on a :
 $$\chi[\mathfrak{p}(a,b,c,d)]=p-(b+c)+(b\wedge c)$$
 D'o\`u le r\'esultat.
  \item[2)] Comme $\chi[\mathfrak{p}(a,b,c,d)]=\chi[\mathfrak{p}(d,c,b,a)]$, il suit du lemme \ref{lem3} que si $c+d\geq a+b$, on a :
  $$\chi[\mathfrak{p}(a,b,c,d)]=\chi[\mathfrak{p}(d+c-a-b,b,a,c)]$$
  Mais alors, si $(d+c)\wedge (a+b+c)\geq a+b$, il r\'esulte du 1) que l'on a :
  $$\chi[\mathfrak{p}(a,b,c,d)]=p-(a+b)+(a\wedge b)$$
  D'o\`u le r\'esultat.
  \item[3)] R\'esulte de 2) et du fait que $\chi[\mathfrak{p}(a,b,c,d)]=\chi[\mathfrak{p}(d,c,b,a)]$.
  
 \item[4)]Soit $p=(b+a-d)\wedge(b+c)\geq a-d\geq 0$, la r\'eduction du th\'eor\`eme \ref{cor1} donne :
 \begin{align*}
 \chi[\mathfrak{p}(a,b,c,d)]&=\chi[\mathfrak{p}(a,b,c+b+(a-d),d)]\\
 &=\chi[\mathfrak{p}(a,(b+a-d)-(a-d),c+b+(a-d),d)]\\
 &=\chi[\mathfrak{p}(a,p-(a-d),(a-d),d)]\\
 &=p-(a-d)+\chi[\mathfrak{p}(a,(a-d),d)]\\
 &=p-(a-d)+(a\wedge d)\\
 \end{align*}
 
 \end{enumerate}
 \end{proof}
 
 \begin{lem}\label{lem4}
 Soient $(a_{1},\ldots,a_{k})$ une composition de $n$ et $(b_{1},\ldots,b_{t})$ une composition de $n^{'}$ avec $n\geq n'$. 
 \begin{itemize}
 \item[1)] Supposons que $a_{1}> a_{k}$, on a :
 $$\chi[\mathfrak{p}(a_{1},\ldots,a_{k})]=\chi[\mathfrak{p}((a_{1}-a_{k})-a_{1}[a_{1}-a_{k}],a_{1}[a_{1}-a_{k}],a_{2},\ldots,a_{k-1})]$$
\item[2)] Il existe une composition $(d_{1},\ldots,d_{s})$ de
  $n-n^{'}$ et $\alpha\in\mathbb{N}$ telle que,
  {\color{black}$pour\;tout\;(c_{1},\ldots,c_{m})\in\mathbb{N}^{m}$,}
$$\chi[\mathfrak{p}(a_{1},\ldots,a_{k},c_{1},\ldots,c_{m},b_{t},\ldots,b_{1})]= \alpha+\chi[\mathfrak{p}(d_{1},\ldots,d_{s},c_{1},\ldots,c_{m})]$$.
 \end{itemize}  
 \end{lem}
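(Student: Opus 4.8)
La d\'emonstration se pr\'esente comme deux r\'ecurrences embo\^{\i}t\'ees, le 1) jouant le r\^ole de brique \'el\'ementaire pour le 2). Pour le 1), je proc\'ederais par r\'ecurrence sur $n=a_{1}+\cdots+a_{k}$ (l'hypoth\`ese $a_{1}>a_{k}$ force $k\geq 2$). En posant $r=a_{1}-a_{k}>0$ et $s=a_{1}[r]$, l'observation de d\'epart est que $a_{1}\equiv a_{k}\pmod{r}$, de sorte que $s=a_{k}[r]$. J'appliquerais le corollaire \ref{cor3}, 1), avec $i=k$ (pour lequel $\tilde a_{k}=a_{k}$), qui ram\`ene $\chi[\mathfrak{p}(a_{1},\ldots,a_{k})]$ soit \`a $\chi[\mathfrak{p}(a_{1}-2a_{k},a_{k},a_{2},\ldots,a_{k-1})]$ lorsque $a_{1}\geq 2a_{k}$, soit \`a $\chi[\mathfrak{p}(a_{k},a_{2},\ldots,a_{k-1},2a_{k}-a_{1})]$ lorsque $a_{1}<2a_{k}$. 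Dans le premier cas on a $r\geq a_{k}$, donc $s=a_{k}$ (ou $s=0$ si $a_{1}=2a_{k}$) et $a_{1}-2a_{k}=r-a_{k}$ : la composition obtenue co\"incide, apr\`es suppression d'un \'eventuel terme nul, avec $(r-s,s,a_{2},\ldots,a_{k-1})$, c'est-\`a-dire la conclusion voulue. Dans le second cas, la composition $(a_{k},a_{2},\ldots,a_{k-1},2a_{k}-a_{1})$ a pour premier terme $a_{k}$, pour dernier terme $2a_{k}-a_{1}=a_{k}-r>0$, de diff\'erence encore \'egale \`a $r$ et de somme strictement inf\'erieure \`a $n$ ; comme $a_{k}>a_{k}-r$, l'hypoth\`ese de r\'ecurrence s'y applique et fournit $\chi[\mathfrak{p}(a_{k},a_{2},\ldots,a_{k-1},a_{k}-r)]=\chi[\mathfrak{p}(r-a_{k}[r],a_{k}[r],a_{2},\ldots,a_{k-1})]=\chi[\mathfrak{p}(r-s,s,a_{2},\ldots,a_{k-1})]$.

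Pour le 2), j'effectuerais une r\'ecurrence sur $n^{'}=b_{1}+\cdots+b_{t}$, le cas $n^{'}=0$ (suite $\underline b$ vide) \'etant imm\'ediat avec $(d_{1},\ldots,d_{s})=(a_{1},\ldots,a_{k})$ et $\alpha=0$. Pour l'h\'er\'edit\'e, je comparerais les blocs extr\^emes $a_{1}$ et $b_{1}$ de la composition $G=(a_{1},\ldots,a_{k},c_{1},\ldots,c_{m},b_{t},\ldots,b_{1})$, comparaison qui ne d\'epend que de $a_{1}$ et $b_{1}$. Si $a_{1}>b_{1}$, le 1) appliqu\'e \`a $G$ remplace ces deux blocs extr\^emes par la t\^ete $((a_{1}-b_{1})-a_{1}[a_{1}-b_{1}],\,a_{1}[a_{1}-b_{1}])$ et laisse intact le reste $(a_{2},\ldots,a_{k},c_{1},\ldots,c_{m},b_{t},\ldots,b_{2})$ : on obtient une configuration de m\^eme milieu $(c_{1},\ldots,c_{m})$, de suffixe associ\'e \`a $(b_{2},\ldots,b_{t})$, donc de param\`etre $n^{'}-b_{1}$, et l'hypoth\`ese de r\'ecurrence conclut avec le m\^eme $\alpha$. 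Si $a_{1}<b_{1}$, j'appliquerais le 2) du lemme \ref{lem1} pour renverser $G$, puis le 1) (dont le premier bloc est alors $b_{1}>a_{1}$), puis de nouveau le 2) du lemme \ref{lem1} : les deux renversements r\'etablissent l'ordre $(c_{1},\ldots,c_{m})$, le bloc $a_{1}$ dispara\^{\i}t du pr\'efixe et le suffixe ne fait intervenir que les $b_{j}$, le param\`etre devenant $n^{'}-a_{1}$. Enfin, si $a_{1}=b_{1}$ et que la partie centrale est non vide, le 5) du lemme \ref{lem1} appliqu\'e aux positions extr\^emes donne $\chi[\mathfrak{p}(G)]=\chi[\mathfrak{p}(a_{1},a_{1})]+\chi[\mathfrak{p}(a_{2},\ldots,a_{k},c_{1},\ldots,c_{m},b_{t},\ldots,b_{2})]$, avec $\chi[\mathfrak{p}(a_{1},a_{1})]=a_{1}$ d'apr\`es le th\'eor\`eme \ref{thm2} ; la r\'ecurrence (param\`etre $n^{'}-a_{1}$) conclut en rempla\c{c}ant $\alpha$ par $\alpha+a_{1}$, le cas d\'eg\'en\'er\'e $G=(a_{1},a_{1})$ se r\'eglant directement avec $\alpha=a_{1}$. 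Dans chaque cas $n^{'}$ d\'ecro\^{\i}t strictement et $n-n^{'}$ est pr\'eserv\'e.

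Le point qui demandera le plus de soin est de v\'erifier que les nouveaux blocs cr\'e\'es par le 1) ne d\'ependent que des blocs extr\^emes $a_{1}$ et $b_{1}$, et jamais des $c_{j}$ : c'est pr\'ecis\'ement ce qui autorise le quantificateur universel sur $(c_{1},\ldots,c_{m})$ \`a traverser la r\'ecurrence, l'hypoth\`ese de r\'ecurrence produisant un unique couple $((d_{i})_{i},\alpha)$ valable pour tous les milieux simultan\'ement. Il faudra aussi s'assurer que la comparaison d\'eterminante porte bien sur les tailles des blocs $a_{1},b_{1}$ et non sur les totaux $n,n^{'}$, et traiter proprement les cas d\'eg\'en\'er\'es (suppression des termes nuls, $k=1$, partie centrale vide, $G=(a_{1},a_{1})$).
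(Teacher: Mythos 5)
Your proposal is correct, and its second half is essentially the paper's own proof of 2): the same trichotomy on $a_{1}$ versus $b_{1}$ (strict inequality handled by 1), the case $a_{1}<b_{1}$ by reversing via le 2) du lemme \ref{lem1}, equality by splitting off $\chi[\mathfrak{p}(a_{1},a_{1})]=a_{1}$), followed by the same induction on $n^{'}$; in fact you are more explicit than the paper, which merely lists the three reductions and concludes ``par r\'ecurrence sur $n^{'}$'', about the key point that the new extreme blocks depend only on $a_{1},b_{1}$, so that the pair $((d_{i})_{i},\alpha)$ produced by the induction is uniform in $(c_{1},\ldots,c_{m})$. Where you genuinely diverge is in 1): the paper's argument is direct, with no induction. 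It first applies le 3) du lemme \ref{lem3} to get $\chi[\mathfrak{p}(a_{1},\ldots,a_{k})]=\chi[\mathfrak{p}(n-2a_{k},a_{1},\ldots,a_{k-1})]$, then applies le th\'eor\`eme \ref{cor1} twice: once at the second entry, whose defect is $|d_{2}|=a_{1}-a_{k}$, replacing $a_{1}$ by $a_{1}[a_{1}-a_{k}]$, and once at the first entry, subtracting $|d_{1}|$ from $n-2a_{k}$, which lands exactly on $(a_{1}-a_{k})-a_{1}[a_{1}-a_{k}]$. You instead invoke the packaged rotation of le corollaire \ref{cor3} with $i=k$, split into the cases $a_{1}\geq 2a_{k}$ and $a_{1}<2a_{k}$, and close the second case by induction on $n$, using the (correct) observation that $a_{1}\equiv a_{k}\pmod{a_{1}-a_{k}}$, hence $a_{1}[r]=a_{k}[r]$. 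Both routes work and rest on the same reduction machinery (le corollaire \ref{cor3} is itself a consequence of \ref{cor1} and \ref{lem3}); the paper's version buys a three-line computation that exhibits the target composition at once, while yours avoids computing the defects $d_{1},d_{2}$ explicitly at the price of a well-founded recursion and some bookkeeping of zero entries, which you handle correctly.
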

 \begin{proof}
 \begin{itemize}
 \item[1)] D'apr\`es le 3) du lemme \ref{lem3}, on a : 
{\color{black}$$\chi[\mathfrak{p}(a_{1},\ldots,a_{k})]=
\chi[\mathfrak{p}(n-2a_{k},a_{1},\ldots,a_{k-1})]$$}
 D'apr\`es le th\'eor\`eme \ref{cor1}, on a :
 \begin{align*}
 \chi[\mathfrak{p}(a_{1},\ldots,a_{k})]&=\chi[\mathfrak{p}(n-2a_{k},a_{1},\ldots,a_{k-1})]\\
 &=\chi[\mathfrak{p}(n-2a_{k},a_{1}[a_{1}-a_{k}],a_{2}\ldots,a_{k-1})]\\
 &=\chi[\mathfrak{p}((a_{1}-a_{k})-a_{1}[a_{1}-a_{k}],a_{1}[a_{1}-a_{k}],a_{2}\ldots,a_{k-1})]\\
 \end{align*}
 \item[2)] D'apr\`es 1), on a : \\
{\color{black}\begin{multline*}
 \chi[\mathfrak{p}(a_{1},\dots,a_{k},c_{1},\ldots,c_{m},b_{t},\dots,b_{1})]=\\
  \begin{cases}a_{1}+\chi[\mathfrak{p}(a_{2},\dots,a_{k},c_{1},\ldots,c_{m},b_{t},\dots,b_{2})]\;si\;a_{1}=b_{1}\\
 \chi[\mathfrak{p}((a_{1}-b_{1})-a_{1}[a_{1}-b_{1}],a_{1}[a_{1}-b_{1}],a_{2},\dots,a_{k},c_{1},\ldots,c_{m},b_{t},\dots,b_{2})]\;si\;a_{1}>b_{1}\\
 \chi[\mathfrak{p}((a_{2},\dots,a_{k},c_{1},\ldots,c_{m},b_{t},\dots,b_{2},b_{1}[b_{1}-a_{1}],(b_{1}-a_{1})-b_{1}[b_{1}-a_{1}])]\;si\;b_{1}>a_{1}\\
 \end{cases}
\end{multline*}}
Le r\'esultat s'ensuit par r\'ecurrence sur l'entier $n^{'}$.  
\end{itemize}
\end{proof}

\begin{lem}\label{lem5}
Soient $(a,b_{1},\ldots,b_{t})\in(\mathbb{N^{*}})^{t+1}$ et $b=b_{1}+\cdots+b_{t}$. On a :
\begin{itemize}
\item[1)] Si $a\geq 2b$, alors :
$\chi[\mathfrak{p}(b_{1},\ldots,b_{t},\underbrace{a-2b,\ldots,a-2b}_{r},\underbrace{a,\ldots,a}_{n},\underbrace{a-2b,\ldots,a-2b}_{r})]=\\ \chi[\mathfrak{p}(b_{1},\ldots,b_{t},\underbrace{a-2b,\ldots,a-2b}_{n+2r})]$, pour tout $(r,n)\in \mathbb{N}^{2}$.
\item[2)] Si $b< a< 2b$, alors :
$\chi[\mathfrak{p}(b_{1},\ldots,b_{t},\underbrace{a,\ldots,a}_{n})]=\chi[\mathfrak{p}(b_{t},\ldots,b_{1},\underbrace{2b-a,\ldots,2b-a}_{n})]$, pour tout $n\in \mathbb{N}$.
\end{itemize}
 \end{lem}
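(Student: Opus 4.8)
My plan is to prove both identities by induction on $n$ (the number of copies of $a$), using as engine the reflection of Corollary \ref{cor3}, the reduction Theorem \ref{cor1}, and the symmetries of Lemma \ref{lem1}. The underlying move is the following: reflecting the first entry of a composition across a suffix of total mass $b$ turns a leading $a$ into $|2b-a|$. Precisely, in Corollary \ref{cor3}(1) with $\tilde{a}_i=b$ the entry $a_1=a$ is replaced by $2b-a$ when $a<2b$, and by $a-2b$ when $a\ge 2b$; dually, whenever a single copy of $a$ is flanked so that the balance $d_i$ at its slot equals $b$, Theorem \ref{cor1} reduces it modulo $b$, and since $a\equiv a-2b\pmod{b}$ this is exactly what makes the value $c:=a-2b$ reappear. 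Both reductions of the lemma are assembled from this move.

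For part 2) one takes $b<a<2b$ and inducts on $n$. The case $n=0$ is the reversal identity Lemma \ref{lem1}(2). For the step I first reverse with Lemma \ref{lem1}(2) to bring the block of $a$'s to the front, then apply Corollary \ref{cor3}(1) with the suffix equal to the $b$-block (so $\tilde{a}_i=b$, and $a<2b$ places us in the second alternative), which peels off one $a$ as a trailing $2b-a$ and reverses the $b$-block:
\[
\chi[\mathfrak{p}(b_1,\dots,b_t,\underbrace{a,\ldots,a}_{n})]=\chi[\mathfrak{p}(b_t,\dots,b_1,\underbrace{a,\ldots,a}_{n-1},2b-a)].
\]
It then remains to convert the surviving $n-1$ copies of $a$ into $2b-a$ and to match the stated orientation of the $b$-block. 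This matching is the main obstacle: after one reflection the new entry $2b-a$ sits at the far end and the $b$-block has been reversed, so the right-hand side is no longer literally of the form ``$b$-block then a pure $a$-block'', and no further suffix has mass $b$, so the same reflection cannot simply be repeated. I plan to overcome this by strengthening the inductive statement to cover compositions of shape (reversed-or-not $b$-block, a block of $a$'s, a block of already-produced $2b-a$'s), using Lemma \ref{lem3} to rotate entries into a reflectable position between successive applications of Corollary \ref{cor3}, and checking that the reflected block accumulates with the asserted orientation.

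For part 1), where $a\ge 2b$ and $c=a-2b\ge 0$, the situation is cleaner because the two flanking blocks of $c$ are present from the start. The cases $n=0$ (nothing to do, the two flanking $c$-blocks already form $2r$ copies of $c$) and $n=1$ (the central $a$ has balance exactly $d=b$, since the $c$-contributions on both sides cancel, so Theorem \ref{cor1} reduces it mod $b$ to the same residue as $c$) are immediate. The inductive step is the symmetric reduction
\[
\chi[\mathfrak{p}(b_1,\dots,b_t,\underbrace{c,\ldots,c}_{r},\underbrace{a,\ldots,a}_{n},\underbrace{c,\ldots,c}_{r})]=\chi[\mathfrak{p}(b_1,\dots,b_t,\underbrace{c,\ldots,c}_{r+1},\underbrace{a,\ldots,a}_{n-2},\underbrace{c,\ldots,c}_{r+1})],
\]
obtained by converting the two outermost copies of $a$ of the central block via Theorem \ref{cor1} and Corollary \ref{cor3}, the flanking $c$'s being exactly what forces each reflected value to be $c$ again; iterating lowers $n$ by $2$ down to the base cases and produces the $n+2r$ copies of $c$. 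Verifying this two-sided conversion step, together with the orientation bookkeeping in part 2), is where the real work lies. As a conceptual check, both identities are the algebraic shadow of a left--right folding symmetry of the meander $\Gamma(\underline{a})$, whose upper arcs all come from the single global reflection $x\mapsto N+1-x$.
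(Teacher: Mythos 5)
Your proposal has a genuine gap at exactly the two places you yourself flag as ``where the real work lies'', and the missing ingredient is Lemma \ref{lem4}(2), which you never invoke. For part 1), the inductive step you assert --- converting the two outermost copies of $a$ of the central block ``via Theorem \ref{cor1} and Corollary \ref{cor3}, the flanking $c$'s being exactly what forces each reflected value to be $c$ again'' --- does not go through as stated. To turn a copy of $a$ into $c=a-2b$ by Theorem \ref{cor1} one needs the modulus $|d_{i}|$ at that slot to divide $2b$; but in $(b_{1},\ldots,b_{t},c,\ldots,c,a,\ldots,a,c,\ldots,c)$ the balance at the first (resp.\ last) $a$ of the central block is $b-(n-1)a$ (resp.\ $b+(n-1)a$), which does not divide $2b$ in general --- only the exact middle copy (for $n$ odd) has balance $b$. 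And Corollary \ref{cor3} reflects only the \emph{first} entry of the whole composition, which here is $b_{1}$, not an $a$, so it cannot reach the outer copies either. For part 2), you correctly observe that one reflection gives $\chi[\mathfrak{p}(b_{t},\ldots,b_{1},a,\ldots,a,2b-a)]$ and that repeating the reflection merely undoes it, but your remedy (a strengthened inductive statement plus rotations via Lemma \ref{lem3}) is left entirely unexecuted, and it is precisely the step that fails naively.

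What the paper does instead is to use Lemma \ref{lem4}(2) as the engine: there exist a composition $(d_{1},\ldots,d_{s})$ of mass $b$ and a constant $\alpha$ such that, \emph{uniformly in the central entries} $(a_{1},\ldots,a_{n})$, one has $\chi[\mathfrak{p}(b_{1},\ldots,b_{t},c,\ldots,c,a_{1},\ldots,a_{n},c,\ldots,c)]=\alpha+\chi[\mathfrak{p}(d_{1},\ldots,d_{s},a_{1},\ldots,a_{n})]$. This absorption of the flanking blocks reduces part 1) to the composition $(d_{1},\ldots,d_{s},a,\ldots,a)$, where reversals and two applications of Corollary \ref{cor3} (now legitimately acting on a \emph{leading} copy of $a$) produce $(d_{1},\ldots,d_{s},c,a,\ldots,a,c)$; the uniformity then transfers this back to the flanked form with $r+1$ copies of $c$ on each side, and induction on $n$ closes. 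Part 2) is then not a separate induction at all: Lemma \ref{lem4}(2) strips the $b$-block together with one trailing $a$, leaving a $d$-block of mass $a-b$ with $a\geq 2(a-b)$, so part 1) converts all remaining $a$'s into $2b-a$, and a final reflection by Corollary \ref{cor3} yields the stated orientation $(b_{t},\ldots,b_{1},2b-a,\ldots,2b-a)$. Without an absorption statement of this kind (or a fully worked-out substitute for it), your induction does not close in either part.
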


\begin{proof}
\begin{itemize}
\item[1)]
Il est clair que le r\'esultat est vrai pour $n=0$. Pour $n=1$, le r\'esultat d\'ecoule du th\'eor\`eme \ref{cor1}. Supposons que $n\geq 2$. \\
D'apr\`es le lemme \ref{lem4}, il existe une composition $(d_{1},\ldots,d_{s})$ de $b$ et un entier $\alpha\geq 0$ tels que pour tout $(a_{1},\dots,a_{n})\in\mathbb{N}^{n}$, on a :
{\color{black}\begin{multline*}
\chi[\mathfrak{p}(b_{1},\ldots,b_{t},\underbrace{a-2b,\ldots,a-2b}_{r},a_{1},\ldots,a_{n},\underbrace{a-2b,\ldots,a-2b}_{r})]=\\
\alpha+\chi[\mathfrak{p}(d_{1},\ldots,d_{s},a_{1},\ldots,a_{n})]
\end{multline*}} 
En particulier, on a 
{\color{black}\begin{multline*}
\chi[\mathfrak{p}(b_{1},\ldots,b_{t},\underbrace{a-2b,\ldots,a-2b}_{r},\underbrace{a,\ldots,a}_{n},\underbrace{a-2b,\ldots,a-2b}_{r})]=
\\\alpha+\chi[\mathfrak{p}(d_{1},\ldots,d_{s},\underbrace{a,\ldots,a}_{n})]
\end{multline*}} 
D'apr\`es le corollaire \ref{cor3}, on a :\\
\begin{align*}
\chi[\mathfrak{p}(d_{1},\ldots,d_{s},\underbrace{a,\ldots,a}_{n})]&=\chi[\mathfrak{p}(a,\underbrace{a,\ldots,a}_{n-1},d_{s},\ldots,d_{1})]\\
&=\chi[\mathfrak{p}(a-2b,,d_{s},\ldots,d_{1},\underbrace{a,\ldots,a}_{n-1})]\\
&=\chi[\mathfrak{p}(\underbrace{a,\ldots,a}_{n-1},d_{1},\ldots,d_{s},a-2b)]\\
\end{align*}

D'autre part on a : $2(d_{1}+\cdots+d_{s}+a-2b)=2(a-b)\geq a$. Il suit du corollaire \ref{cor3}, que l'on a :\\  
$\chi[\mathfrak{p}(\underbrace{a,\ldots,a}_{n-1},d_{1},\ldots,d_{s},a-2b)]=\chi[\mathfrak{p}(d_{1},\ldots,d_{s},a-2b,\underbrace{a,\ldots,a}_{n-2},a-2b)]$. D'o\`u :\\
$\chi[\mathfrak{p}(b_{1},\ldots,b_{t},\underbrace{a-2b,\ldots,a-2b}_{r},\underbrace{a,\ldots,a}_{n},\underbrace{a-2b,\ldots,a-2b}_{r})]=\\ \chi[\mathfrak{p}(b_{1},\ldots,b_{t},\underbrace{a-2b,\ldots,a-2b}_{r+1},\underbrace{a,\ldots,a}_{n-2},\underbrace{a-2b,\ldots,a-2b}_{r+1})]$. Le r\'esultat s'ensuit par r\'ecurrence sur l'entier $n$.\\

\item[2)]
D'apr\`es le lemme \ref{lem4}, il existe {\color{black} une} composition $(d_{1},\ldots,d_{s})\in\mathbb{N}^{s}$ de $a-b$ et un entier $\alpha\geq 0$ tels que pour tout $(a_{1},\dots,a_{n-1})\in\mathbb{N}^{n-1}$, on a : \\ 
$\chi[\mathfrak{p}(b_{1},\ldots,b_{t},a_{1},\ldots,a_{n-1},a)]=\alpha+\chi[\mathfrak{p}(d_{1},\ldots,d_{s},a_{1},\ldots,a_{n-1})]$\\
En particulier, on a \\
$\chi[\mathfrak{p}(b_{1},\ldots,b_{t},\underbrace{a,\ldots,a}_{n})]=\alpha+\chi[\mathfrak{p}(d_{1},\ldots,d_{s},\underbrace{a,\ldots,a}_{n-1})]$.\\

Puisque $a> 2(a-b)$, on d\'eduit alors de 1) que :\\
\begin{align*}
\chi[\mathfrak{p}(b_{1},\ldots,b_{t},\underbrace{a,\ldots,a}_{n})]&=\alpha+\chi[\mathfrak{p}(d_{1},\ldots,d_{s},\underbrace{a,\ldots,a}_{n-1})]\\
&=\alpha+\chi[\mathfrak{p}(d_{1},\ldots,d_{s},\underbrace{2b-a,\ldots,2b-a}_{n-1})]\\
&=\chi[\mathfrak{p}(b_{1},\ldots,b_{t},\underbrace{2b-a,\ldots,2b-a}_{n-1},a)]\\
\end{align*}

Il suit du corollaire \ref{cor3} que l'on a :\\
\begin{align*}
\chi[\mathfrak{p}(b_{1},\ldots,b_{t},\underbrace{a,\ldots,a}_{n})]&=\chi[\mathfrak{p}(a,\underbrace{2b-a,\ldots,2b-a}_{n-1},b_{t},\ldots,b_{1})]\\
&=\chi[\mathfrak{p}(b_{t},\ldots,b_{1},\underbrace{2b-a,\ldots,2b-a}_{n})]\\
\end{align*}

D'o\`u le r\'esultat.
\end{itemize}
\end{proof}

\begin{theo}\label{thm5}
Soient $I=\lbrace(a,b)\in\mathbb{N^{\times}}^{2}\mid a\;est\;impair\;ou\;b\;est\;impair\rbrace$, $m\in\mathbb{N}^{\times}$ et $\phi_{m}$ la fonction d\'efinie sur $I$ par :
$$\phi_{m}(a,b)=\begin{cases} [\frac{m}{2}]+1\;si\;a\;impair\;et\;b\;impair\\
[\frac{m+1}{2}]\;si\;a\;impair\;et\;b\;pair\\
1\;si\;a\;pair\;et\;b\;impair\\
\end{cases}$$
Pour tous $a,b,m\in\mathbb{N^{\times}}$, on a :
$$\chi[\mathfrak{p}(\underbrace{a,\ldots,a}_{m},b)]=(a\wedge b)\phi_{m}(\frac{a}{a\wedge b},\frac{b}{a\wedge b})$$.
\end{theo}

\begin{proof}
Si $m=1$, le r\'esultat est vrai puisque $\phi_{1}(\frac{a}{a\wedge b},\frac{b}{a\wedge b})=1$. Supposons  $m>1$.\\
Supposons $a=1$, on a :\\
 $\chi[\mathfrak{p}(\underbrace{1,\ldots,1}_{m},b)]=\chi[\mathfrak{p}(\underbrace{1,\ldots,1}_{m},b[m])]=\chi[\mathfrak{p}(\underbrace{1,\ldots,1}_{b[m]})]+\chi[\mathfrak{p}(\underbrace{1,\ldots,1}_{m-b[m]})]=[\frac{b[m]+1}{2}]+[\frac{m-b[m]+1}{2}]=\phi_{m}(1,b[m])=\phi_{m}(1,b)$. D'o\`u le r\'esultat pour $a=1$.

D'apr\`es le lemme \ref{lem3}, on a :
$$\chi[\mathfrak{p}(\underbrace{a,\ldots,a}_{m},b)]=\begin{cases}\chi[\mathfrak{p}(\underbrace{a,\ldots,a}_{m},b[a])]\;si\;m\;est\;impair\\
\chi[\mathfrak{p}(\underbrace{a,\ldots,a}_{m},b[2a])]\;si\;m\;est\;pair\\
\end{cases}$$

et $\chi[\mathfrak{p}(\underbrace{a,\ldots,a}_{m},b)]=\chi[\mathfrak{p}(\underbrace{a,\ldots,a}_{m},ma-b)]$ si $b\leq ma$. Par suite, on peut se ramener au cas o\`u $b\leq a$.

D'apr\`es le lemme \ref{lem5}, on a :
$$\chi[\mathfrak{p}(\underbrace{a,\ldots,a}_{m},b)]=\begin{cases}\chi[\mathfrak{p}(\underbrace{a-2b,\ldots,a-2b}_{m},b)]\;si\;a\geq 2b\\
\chi[\mathfrak{p}(\underbrace{2b-a,\ldots,2b-a}_{m},b)]\;si\;a<2b\\
\end{cases}$$

Supposons que $a=2b$, alors $\chi[\mathfrak{p}(\underbrace{a,\ldots,a}_{m},b)]=b$. Le r\'esultat est vrai vue que $\phi_{m}(\frac{a}{a\wedge b},\frac{b}{a\wedge b})=\phi_{m}(2,1)=1$. Supposons que $a\neq 2b$, on v\'erifie que $\phi_{m}(\frac{a}{a\wedge b},\frac{b}{a\wedge b})=\phi_{m}(\frac{|a-2b|}{|a-2b|\wedge b},\frac{b}{|a-2b|\wedge b})$. Le r\'esultat s'ensuit alors par r\'ecurrence sur $a$.
\end{proof}

 \begin{cor}\label{cor5}
 $\chi[\mathfrak{p}(\underbrace{a,\ldots,a}_{m},b)]=1$ si et seulement si $a\wedge b=1$ et de plus l'une des conditions suivantes est v\'erifi\'ee
\begin{itemize}
\item[(i)]m=1
\item[(ii)]$a$ est pair, $b$ est impair
\item[(iii)]$a$ est impair, $b$ est pair et $m=2$
\end{itemize}
 \end{cor}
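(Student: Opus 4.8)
The plan is to deduce the statement directly from Theorem~\ref{thm5}. Observe first that every branch of $\phi_{m}$ takes values in $\mathbb{N}^{\times}$: indeed $[\frac{m}{2}]+1\geq 1$, $[\frac{m+1}{2}]\geq[\frac{2}{2}]=1$ for $m\geq 1$, and the third branch equals $1$. Hence, by Theorem~\ref{thm5}, the index $\chi[\mathfrak{p}(\underbrace{a,\ldots,a}_{m},b)]=(a\wedge b)\,\phi_{m}(\frac{a}{a\wedge b},\frac{b}{a\wedge b})$ is a product of two positive integers, so it equals $1$ if and only if both factors equal $1$. I would therefore first reduce the claim to the following: $\chi=1$ if and only if $a\wedge b=1$ and $\phi_{m}(\frac{a}{a\wedge b},\frac{b}{a\wedge b})=1$. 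When $a\wedge b=1$ the arguments simplify to $(a,b)$, and coprimality forbids $a$ and $b$ being simultaneously even, so $(a,b)\in I$ and $\phi_{m}(a,b)$ is well defined.

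It then remains to solve $\phi_{m}(a,b)=1$ according to the parities of $a$ and $b$. If $a$ and $b$ are both odd, then $\phi_{m}(a,b)=[\frac{m}{2}]+1$, which equals $1$ exactly when $m=1$. If $a$ is odd and $b$ is even, then $\phi_{m}(a,b)=[\frac{m+1}{2}]$, which equals $1$ exactly when $m\in\{1,2\}$. If $a$ is even and $b$ is odd, then $\phi_{m}(a,b)=1$ for every $m$. These three computations are elementary properties of the integer part.

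Finally I would match the solution set with the stated disjunction. Collecting the three cases, $\phi_{m}(a,b)=1$ holds precisely when $m=1$, or $a$ is even and $b$ is odd (for any $m$), or $a$ is odd, $b$ is even and $m=2$; that is, exactly when one of the conditions (i), (ii), (iii) is satisfied. I would check both inclusions: the forward one distributes the equality $\phi_{m}(a,b)=1$ over the three parity branches as above, while the converse verifies $\phi_{1}(a,b)=1$ in all parity cases (covering (i)), $\phi_{m}=1$ under (ii), and $\phi_{2}(a,b)=[\frac{3}{2}]=1$ under (iii). There is essentially no serious obstacle here, since the substance of the corollary is already contained in Theorem~\ref{thm5}; the only point requiring a little care is to confirm that the disjunction (i), (ii), (iii) reproduces the union of the per-branch solution sets with no gap and no spurious case, in particular that the overlaps at $m=1$ are correctly absorbed by condition (i).
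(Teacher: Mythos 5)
Your proposal is correct and follows exactly the route the paper intends: the corollary is stated as an immediate consequence of Th\'eor\`eme~\ref{thm5}, and your deduction (factor $\chi=(a\wedge b)\phi_{m}$ into two positive integers, note coprimality forces $(a,b)\in I$, then solve $\phi_{m}(a,b)=1$ branch by branch and recombine into (i)--(iii)) is precisely the omitted verification. The case analysis and the absorption of the $m=1$ overlaps into condition (i) are all accurate.
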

 
 \section{quelques propri\'et\'es sur le m\'eandre}
 
 Soient $\underline{a}=(a_{1},\ldots,a_{k})$ une composition de $n$, $\mathfrak{p}(\underline{a})$ une sous-alg\`ebre parabolique de $\mathfrak{gl}(n)$ et $\Gamma(\underline{a})$ son m\'eandre. Si X est un cycle de $\Gamma(\underline{a})$, il existe un entier $s>1$, appel\'e dimension de X et $k\in\mathbb{N}^{\times}$ tels que les sommets de X sont de la forme $x_{1}<x_{1}+s-1<x_{2}<\ldots <x_{k}<x_{k}+s-1$ (voir \cite{MB}). Si $s>2$, pour tous $1\leq i\leq k$ et $x_{i}\leq y\leq x_{i}+s-1$, on dit que le cycle (resp.  segment) Y, tel que $y\in S_{Y}$, est \`a l'int\'{e}rieur de X. Le cycle X est dit maximal s'il n'est pas \`a l'int\'{e}rieur d'un autre cycle, auquel cas, sa dimension est donn\'ee par $\dim(X)=2\card$\{cycles se trouvant \`{a} l'int\'{e}rieur\}$+\card$\{segments se trouvant \`{a} l'int\'{e}rieur\}$+2$ (voir \cite{A.O}) . Par convention, un segment X qui n'est pas contenu dans un cycle est consid\'{e}r\'{e} comme un cycle maximal de dimension 1. Il suit du th\'eor\`eme \ref{thm1} que l'indice de $\mathfrak{p}(\underline{a})$ est \'egal  \`a la somme des dimensions des cycles maximaux de $\Gamma(\underline{a})$.
    
\begin{rem}\label{rr}
\begin{itemize}   
\item[i)] Un sommet $x$ est l'extr\'emit\'e d'un segment de $\Gamma(\underline{a})$ si et seulement si, il existe $1\leq i\leq k$ tels que $a_{i}$ est impair et $x=a_{1}+\dots+a_{i-1}+[\frac{a_{i}+1}{2}]$ ou $n$ est impair et $x=\frac{n+1}{2}$.
\item[ii)] Un cycle maximal de $\Gamma(\underline{a})$ de dimension 1
  peut \^etre r\'eduit \`a {\color{black} un} point $x$. On v\'erifie
  dans ce cas, que $n$ est un entier impair, et de plus, il existe
  $1\leq i\leq k$ tels que $a_{i}=1$ et
  $x=a_{1}+\dots+a_{i}=\frac{n+1}{2}$.
\end{itemize}
\end{rem}  
  
\begin{defi} Soient X un cycle maximal de $\Gamma(\underline{a})$ de dimension $s>1$ et $x_{1}<x_{1}+s-1<x_{2}<\ldots <x_{k}<x_{k}+s-1$ les sommets de X. On appelle bout de X un arc de X joignant deux sommets de la forme $x_{i}$ et $x_{i}+s-1$.
\end{defi}

\begin{lem}\cite{MB}\label{lem11}
Tout cycle maximal de $\Gamma(\underline{a})$ de dimension $s>1$ a exactement deux bouts.
\end{lem}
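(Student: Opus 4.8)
The plan is to realise the cycle $X$ as a Jordan curve and to convert the count of \emph{bouts} into a count of leaves of a tree. For a parabolic subalgebra the upper arcs of $\Gamma(\underline{a})$ are the arcs $\{x,\theta_{\underline{b}}(x)\}$ with $\theta_{\underline{b}}(x)=n+1-x$, and the lower arcs are the $\{x,\theta_{\underline{a}}(x)\}$, where $\theta_{\underline{a}}$ is the reflection inside each block $I_i$ (so that $x+\theta_{\underline{a}}(x)$ is constant on each block). Since $X$ is a cycle, each of its $2m$ vertices meets exactly one upper and one lower arc; the upper arcs are pairwise non-crossing (all being ``centred'' at $(n+1)/2$) and the lower arcs are pairwise non-crossing (nested inside the blocks), so the drawing is planar and $X$ is a simple closed curve meeting the line $D$ transversally at each of its $2m$ vertices, bounding a disk $\Omega$. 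The $2m$ transverse crossings cut $D$ into intervals that are alternately outside and inside $\Omega$; since the leftmost (unbounded) interval is outside, the ordering $x_1<x_1+s-1<x_2<\dots<x_m+s-1$ identifies the inside intervals as exactly the $m$ columns $(x_i,x_i+s-1)$.

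These $m$ columns are disjoint chords of $\Omega$ and cut it into $m+1$ regions; the incidence graph $T$ whose vertices are the regions and whose edges are the chords is a tree. A bout, i.e. an arc joining $x_i$ to $x_i+s-1$, is exactly a region incident to a single chord: such a region is a bigon bounded by the chord $(x_i,x_i+s-1)$ and one arc, which therefore joins $x_i$ and $x_i+s-1$, and conversely a bout caps its column into such a bigon. Hence the number of bouts is the number of leaves of $T$, and it suffices to prove that $T$ is a path, i.e. that \emph{every region of $\Omega$ is incident to at most two chords}.

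On the boundary of a fixed region, chords (on $D$) and arcs (off $D$) alternate, and all the arcs lie on the same side of $D$. If the region lies above $D$, its arcs are $\theta_{\underline{b}}$-arcs, each joining a pair summing to $n+1$; two such arcs are always nested or equal, so the arcs bounding the region are totally nested and at most two of them can occur, whence at most two chords. If the region lies below $D$, its arcs are $\theta_{\underline{a}}$-arcs; each joins a pair symmetric inside one block, and the same nesting argument gives the bound \emph{provided} all these arcs belong to a single block.

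The main obstacle is this last proviso, namely ruling out a region straddling a block boundary. I would dispose of it by a crossing-parity observation: if $q\in D$ lies strictly between two consecutive blocks, then no lower arc spans $q$, so the downward vertical ray from $q$ meets $X$ in no point and $q$ is outside $\Omega$, and likewise every point $(q,-\varepsilon)$ is outside. Thus each block boundary lies in an outside interval, no column contains a block boundary, and a lower region (being connected and avoiding the verticals through the boundaries) is confined to a single block. All its bounding arcs then lie in one block, the nesting argument applies, and every region is incident to at most two chords. Consequently $T$ is a path, which has exactly two leaves, so $X$ has exactly two bouts.
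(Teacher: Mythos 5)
The paper never proves this lemma: it is imported from \cite{MB} as a known fact, so there is no in-text argument to compare yours against, and your proposal stands as an independent, self-contained proof. Checked on its own merits, it is correct. The planar realisation is legitimate (for a parabolic, upper arcs are concentric semicircles about $(n+1)/2$ and lower arcs are concentric within each block, so a cycle $X$ is a simple closed curve crossing $D$ transversally exactly at its $2m$ vertices); combined with the vertex structure $x_{1}<x_{1}+s-1<x_{2}<\cdots<x_{m}+s-1$ recalled at the start of Section 3 (itself from \cite{MB}, hence fair to invoke), the inside/outside alternation along $D$ does identify the inside intervals with the columns; the chord decomposition of $\Omega$ is a tree whose leaves are exactly the bouts (here you use, implicitly but harmlessly, that a column contains no vertex of $X$, so the bigon capped by a bout really is a single region). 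The genuinely delicate point is the one you isolate: a lower region could a priori be bounded by arcs from several blocks, where nesting fails, and your vertical-ray observation (no arc of $X$ meets the downward ray below a block boundary, so that ray lies in the unbounded component of the complement of $X$) correctly confines each lower region to one block; the concentric-nesting bound then gives degree at most two everywhere, so $T$ is a path with exactly two leaves. Two remarks: your argument nowhere uses maximality of $X$, so it actually proves the stronger statement that \emph{every} cycle has exactly two bouts; and the one step you assert without proof, the alternation of chords and arcs along a region's boundary, follows in a line from the facts that $X$ crosses $D$ at each of its vertices (so each boundary piece off $D$ is a whole arc) and that distinct columns share no endpoint. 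Compared with deferring to \cite{MB}, your route buys a purely topological proof and the extra structural information that the regions of $\Omega$ form a path, which explains why the count is exactly two and not merely at least two.
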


\begin{defi}
On dira que deux m\'eandres $\Gamma_{1}$ et $\Gamma_{2}$ sont \'equivalents s'il existe une bijection de l'ensemble des composantes connexes de $\Gamma_{1}$ sur l'ensemble des composantes connexes de $\Gamma_{2}$ qui conserve le nombre de cycles maximaux, ainsi que leurs dimensions.
\end{defi}

Compte tenu des r\'eductions utilis\'ees dans la preuve du lemme \ref{lem1} dont d\'ecoule le th\'eor\`eme \ref{cor1}, on d\'eduit le lemme suivant :

\begin{lem}\label{lem10}
Soit $(a_{1},\ldots,a_{k})$ une composition, $d_{k}=-(a_{1}+\cdots+a_{k-1})$ et $d_{i}=(a_{1}+\cdots+a_{i-1})-(a_{i+1}+\cdots+a_{k})$, $ 1\leq i\leq k-1$. Pour tout $1\leq i\leq k$ tel que $d_{i}\neq 0$, les m\'eandres $\Gamma(a_{1},\ldots,a_{k})$ et $\Gamma(a_{1},\ldots,a_{i-1},a_{i}[|d_{i}|],a_{i+1},\ldots,a_{k})$ sont \'equivalents.
\end{lem}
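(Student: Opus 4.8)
The plan is to upgrade the index equality of Th\'eor\`eme~\ref{cor1} into an equivalence of meanders in the sense of the preceding definition, i.e.\ to produce a bijection between the connected components of $\Gamma(a_{1},\ldots,a_{k})$ and those of $\Gamma(a_{1},\ldots,a_{i-1},a_{i}[|d_{i}|],a_{i+1},\ldots,a_{k})$ matching maximal cycles to maximal cycles and preserving their dimensions. The first step is to cut down what must be verified. Since the dimension of a maximal cycle $X$ equals $2\,\card\{\text{interior cycles}\}+\card\{\text{interior segments}\}+2$, and ``interior'' means ``having a vertex under a bout of $X$'', any bijection $\Phi$ that (a) sends cycles to cycles and segments to segments, and (b) respects the relation ``$Y$ lies in the interior of the cycle $X$'', automatically carries maximal cycles to maximal cycles and preserves dimensions. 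So it suffices to produce one bijection enjoying (a) and (b). Moreover, by the iteration already performed in Lemme~\ref{lem2} and in the derivation of Th\'eor\`eme~\ref{cor1}, it is enough to treat the elementary move $a_{i}\rightsquigarrow a_{i}+d_{i}$ (with $a_{i}+d_{i}\ge 0$); the cases $a_{i}+\alpha|d_{i}|$ and the residue $a_{i}[|d_{i}|]$ follow by composition, and $d_{i}=0$ is excluded.

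The next step is to revisit each elementary bijection used in the proof of Lemme~\ref{lem1}, of which the reduction of Th\'eor\`eme~\ref{cor1} is a composite, and to check property (b) for each; property (a) and the matching of counts are already the content of Lemme~\ref{lem1}. Parts~1), 5) and 6) of Lemme~\ref{lem1} act on the vertex set by the identity or by a disjoint splitting, so the planar drawing of the components is literally unchanged and (b) is immediate. Part~2) acts by the reflection $\theta(x)=n-x+1$ of the line, an order-reversing symmetry of the drawing that carries each component $C$ to $\theta(C)$ while preserving spans and containment, so (b) holds there too.

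The crux is Part~3), the fold relating the biparabolic meander $\Gamma(\underline{a}\mid\underline{b})$ on $[1,n]$ to the parabolic meander $\Gamma(\underline{c})$ on $[1,2n]$ with $\underline{c}=(a_{1},\ldots,a_{k},b_{t},\ldots,b_{1})$. Here $\Gamma(\underline{c})$ is symmetric under $\theta(x)=2n+1-x$, since its upper arcs are exactly the $\theta$-rainbow; hence every component is $\theta$-stable and the fold sends such a component $O$ to $O\cap[1,n]$, the within-right-half lower arcs being reread as upper arcs of $\Gamma(\underline{a}\mid\underline{b})$ through $\theta$. I would argue that this fold preserves nesting because it is planar: folding $[n+1,2n]$ back onto $[1,n]$ about the fixed point $n+\frac{1}{2}$ preserves the linear order of the arc-endpoints, hence preserves which arcs are drawn one inside another. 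As the reduction of Th\'eor\`eme~\ref{cor1} between the two parabolic meanders is a composition of such folds and unfolds with the reflections and trivial moves above, the composite bijection satisfies (b).

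The step I expect to be the main obstacle is exactly this fold, for the structural reason that ``interior'' and ``dimension'' are defined only for parabolic meanders, so passing through the biparabolic intermediate forces one either to extend these notions to biparabolic meanders (via the symmetric $2n$-vertex model) and check that the fold respects them, or else to compose the inward and outward folds into a single bijection between the two parabolic meanders and verify (b) there directly. For the latter the useful tool is the translation structure of block~$i$: writing $\theta_{i}$ for the reflection of block~$i$ and $\theta_{n}$ for the global upper reflection, one computes $\theta_{n}\theta_{i}(x)=x-d_{i}$, so following a path through block~$i$ shifts the relevant endpoints by $d_{i}$ at each double step. This makes precise that reducing $a_{i}$ by $|d_{i}|$ merely removes one uniform layer of width $|d_{i}|$ from block~$i$, a surgery that shortens the arcs through that block without altering which components are nested inside which; checking that this layer-removal leaves the multiset of maximal-cycle dimensions intact, using Lemme~\ref{lem11} to keep track of the two bouts of each maximal cycle, is the computation that completes the proof.
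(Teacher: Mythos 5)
Your proposal is correct and follows essentially the same route as the paper: the paper's entire ``proof'' of this lemma is the single remark preceding its statement, namely that it is deduced from the reductions used in the proof of Lemme~\ref{lem1} (from which Th\'eor\`eme~\ref{cor1} is derived) --- exactly the elementary bijections (identity/disjoint splitting, reflection, fold) that you re-examine and check against the interior relation. Your write-up is in fact more careful than the paper's, in particular in reducing equivalence to preservation of the nesting relation via the dimension formula and in isolating the fold of part 3) of Lemme~\ref{lem1} as the only step needing a genuine argument, all of which the paper leaves implicit.
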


\begin{lem}
\begin{enumerate}
 \item[1)] Pour tous $a_{1},a_{2}\in\mathbb{N}^{\times}$, le m\'eandre $\Gamma(a_{1},a_{2})$ contient un seul cycle maximal de dimension $a_{1}\wedge a_{2}$.
 \item[2)] Pour tous $a_{1},a_{2},a_{3}\in\mathbb{N}^{\times}$, le m\'eandre $\Gamma(a_{1},a_{2},a_{3})$ contient deux cycles maximaux de dimensions respectives $p-a_{2}[p]$ et $a_{2}[p]$, o\`u $p=(a_{1}+a_{2})\wedge (a_{2}+a_{3})$.
 \item[3)]Soit $a,b,m\in\mathbb{N}^{\times}$ et $\phi_{m}$ la fonction d\'efinie au th\'eor\`eme \ref{thm5}, le m\'eandre  $\Gamma(\underbrace{a,\ldots,a}_{m},b)$ contient $\phi_{m}(\frac{a}{a\wedge b},\frac{b}{a\wedge  b})$ cycles maximaux, chaque cycle est de dimension $(a\wedge b)$.
 \end{enumerate}
\end{lem}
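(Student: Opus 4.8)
The guiding principle is that, as recalled just before Remark~\ref{rr}, $\chi[\mathfrak{p}(\underline{a})]$ equals the sum of the dimensions of the maximal cycles of $\Gamma(\underline{a})$, and that by Lemma~\ref{lem10} the reductions of Theorem~\ref{cor1} are \emph{equivalences of meanders}, i.e.\ they preserve both the number of maximal cycles and their dimensions (the same holds for the reductions of Lemmas~\ref{lem3} and~\ref{lem5}, since these are built from the elementary moves of Lemma~\ref{lem1}). In each item the announced dimensions sum to the index already determined in Theorems~\ref{thm2} and~\ref{thm5}, so the point is the finer decomposition, and the strategy is to re-run the very reductions used to compute those indices while keeping track of the multiset of dimensions of maximal cycles. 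For item (1) I would iterate Lemma~\ref{lem10} with $d_{1}=-a_{2}$ and $d_{2}=-a_{1}$, i.e.\ run the Euclidean algorithm, reducing $\Gamma(a_{1},a_{2})$ to $\Gamma(g)$ with $g=a_{1}\wedge a_{2}$. The meander of $\mathfrak{gl}(g)$ is a nest of arcs $\{x,g+1-x\}$ with a central fixed vertex when $g$ is odd; its outermost arc is the unique maximal cycle, and the dimension formula $2\,\#\{\text{inner cycles}\}+\#\{\text{inner segments}\}+2$ gives dimension exactly $g$.

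For item (2) put $p=(a_{1}+a_{2})\wedge(a_{2}+a_{3})$. I would argue by induction on $n$, using the three cases of the proof of Theorem~\ref{thm2} ($a_{1}\ge a_{2}+a_{3}$, or $a_{3}\ge a_{1}+a_{2}$, or $a_{2}\ge|a_{1}-a_{3}|$). A short gcd computation shows that each corresponding reduction of Lemma~\ref{lem10} preserves \emph{both} $p$ and the residue $a_{2}[p]$ — for the third case one uses that $p$ divides $a_{1}-a_{3}$ — hence preserves the pair $\{p-a_{2}[p],\,a_{2}[p]\}$, while strictly lowering $n$ as long as $a_{1}\neq a_{3}$. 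The induction thus reaches either a two-block meander (when $a_{2}[p]=0$, giving by (1) a single maximal cycle of dimension $p$) or the symmetric case $a_{1}=a_{3}$. In the latter I would split by Lemma~\ref{lem1}(5): $\Gamma(a_{1},a_{2},a_{1})$ is the disjoint union of the components of $\Gamma(a_{1},a_{1})$ and of $\Gamma(a_{2})$, which by (1) carry single maximal cycles of dimensions $a_{1}=p-a_{2}[p]$ and $a_{2}=a_{2}[p]$.

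For item (3) I would run the reductions of the proof of Theorem~\ref{thm5} — reduction to $b\le a$ via Lemma~\ref{lem3}, then $a\mapsto|a-2b|$ via Lemma~\ref{lem5}, with recursion on $a$ — each being a composition of elementary meander equivalences, hence preserving the multiset of dimensions of maximal cycles together with $a\wedge b$ and $\phi_{m}(a/(a\wedge b),b/(a\wedge b))$. At the base cases ($a=2b$, a single cycle of dimension $b=a\wedge b$; and $a=b=1$, where $\Gamma(1,\ldots,1)$ splits into segments of dimension $1$) all maximal cycles transparently have the common dimension $a\wedge b$; hence so do those of the original meander, and dividing the index $(a\wedge b)\,\phi_{m}(\ldots)$ of Theorem~\ref{thm5} by this common dimension forces the count to be $\phi_{m}(\ldots)$.

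The delicate point, present already in (2), is the bookkeeping of nesting. Since the index only controls the \emph{total} dimension, everything hinges on deciding, for each smaller component, whether it lies \emph{inside} a cycle (and so contributes to that cycle's dimension) or in the gap between the cycle's two bouts (and so is a separate maximal cycle). Concretely, in the symmetric step of (2) I must verify, from the explicit shape of $\Gamma(a_{1},a_{1})$ obtained in (1), that the two bouts of its maximal cycle are the extreme arcs of the first and last blocks, so that the inserted middle block occupies exactly the gap \emph{between} the two bout-intervals $[x_{i},x_{i}+s-1]$; then, although the middle block sits under a top arc of the outer cycle, it is not ``inside'' it in the formal sense, the components of $\Gamma(a_{2})$ form a genuinely separate maximal cycle, and the outer cycle keeps dimension $a_{1}$. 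Checking this non-merging — rather than the routine verification that the reductions preserve the index — is the main obstacle.
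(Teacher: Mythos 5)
Your proposal follows essentially the same route as the paper: both rest on Lemma \ref{lem10} (the reductions behind Theorem \ref{cor1} are equivalences of meanders), re-run the reductions used to prove Theorems \ref{thm2} and \ref{thm5} while tracking maximal cycles, and settle the terminal configurations directly (the single-block nest $\Gamma(g)$, resp. $\Gamma(1,1)$, for item (1); the symmetric case $\Gamma(a_{1},a_{2},a_{1})$ via the disjoint-union splitting of Lemma \ref{lem1} for item (2); the cases $a=b=1$ and $a=2b$ for item (3)). If anything you are more careful than the paper on the two points it merely asserts, namely that each reduction preserves the data $p$ and $a_{2}[p]$, and that in $\Gamma(a_{1},a_{2},a_{1})$ the components of the middle block lie \emph{between} the two bouts of the outer cycle rather than inside it, so that they form a separate maximal cycle.
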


\begin{proof}
\begin{itemize}
\item[1)] Le m\'eandre $\Gamma(1,1)$ est un segment, d'o\`u le r\'esultat pour $a_{1}=a_{2}=1$. Il suit du lemme \ref{lem10} qu'un raisonnement par r\'ecurrence sur la somme $a_{1}+a_{2}$ donne le r\'esultat.
 
\item[2)] Le m\'eandre $\Gamma(1,1,1)$ est compos\'e de deux segments, d'o\`u le r\'esultat pour $a_{1}=a_{2}=a_{3}=1$. D'autre part, le m\'eandre $\Gamma(a_{1},a_{2},a_{1})$ est compos\'e de deux cycles maximaux de dimensions respectives $a_{1}$ et $a_{2}$, d'o\`u le r\'esultat pour $a_{1}=a_{3}$. De m\^eme, il suit du lemme \ref{lem10} que le r\'esultat s'obtient par r\'ecurrence sur la somme $a_{1}+a_{2}+a_{3}$ . 
\item[3)]Le m\'eandre $\Gamma(\underbrace{1,\ldots,1}_{m+1})$ est compos\'e de $[\frac{m+2}{2}]$ segments, d'o\`u le r\'esultat pour $a=b=1$. D'autre part, on a vu dans la preuve du th\'eor\`eme \ref{thm5} qu'on peut se ramener au cas o\`u $b\leq a$ et que $$\chi[\mathfrak{p}(\underbrace{a,\ldots,a}_{m},b)]=\begin{cases}\chi[\mathfrak{p}(\underbrace{a-2b,\ldots,a-2b}_{m},b)]\;si\;a\geq 2b\\
\chi[\mathfrak{p}(\underbrace{2b-a,\ldots,2b-a}_{m},b)]\;si\;a<2b\\
\end{cases}$$
Comme toutes les r\'eductions utilis\'ees dans cette preuve d\'ecoulent du th\'eor\`eme \ref{cor1}, il suit du lemme \ref{lem10} que si $a\geq 2b$ (resp. $a<2b$), le m\'eandre $\Gamma(\underbrace{a,\ldots,a}_{m},b)$ et le m\'eandre $\Gamma(\underbrace{a-2b,\ldots,a-2b}_{m},b)$ (resp. $\Gamma(\underbrace{2b-a,\ldots,2b-a}_{m},b)$) sont \'equivalents. Le r\'esultat s'ensuit par r\'ecurrence sur $a$.

\end{itemize}
\end{proof}

\begin{rem}
Les assertions 1) et 2) du lemme pr\'ec\'edent sont donn\'es d'une mani\`ere ind\'ependante dans \cite{VCM2}.
\end{rem}

\begin{lem}
\begin{enumerate}
\item[1)]Soit $r$ le nombre de cycles maximaux de $\Gamma(a_{1},\ldots,a_{k})$, on a :
 $$r\leq[\frac{k+1}{2}]$$
\item[2)]$\Gamma(a_{1},\ldots,a_{k})$ contient un seul cycle maximal si est seulement si $\chi[\mathfrak{p}(a_{1},\ldots,a_{k})]=a_{1}\wedge a_{2}\wedge \ldots \wedge a_{k}$.
\end{enumerate}
\end{lem}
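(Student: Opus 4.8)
Le lemme final comporte deux assertions portant sur le nombre $r$ de cycles maximaux du méandre $\Gamma(a_{1},\ldots,a_{k})$. L'idée centrale est d'exploiter systématiquement l'algorithme de réduction fourni par le théorème \ref{cor1}, qui, via le lemme \ref{lem10}, préserve le nombre de cycles maximaux et leurs dimensions, donc en particulier préserve $r$ et la relation $\chi[\mathfrak{p}(\underline{a})]=\sum(\text{dimensions des cycles maximaux})$.

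**Preuve de 1).** Pour la majoration $r\leq[\frac{k+1}{2}]$, le plan est de raisonner par récurrence sur le nombre de parts $k$, en utilisant les réductions du lemme \ref{lem1}. D'après le lemme \ref{lemP}, il existe toujours un indice $i$ tel que $a_{i}\geq|d_{i}|$. Dans le cas $d_{i}=0$, l'assertion 2) du théorème \ref{cor1} montre que $\Gamma(\underline{a})$ se scinde (via le 5) du lemme \ref{lem1}) en un méandre à moins de $k$ parts plus le cycle maximal isolé correspondant à $a_{i}$, ce qui permet de propager l'hypothèse de récurrence. Dans le cas $d_{i}\neq 0$, la réduction $a_{i}\rightsquigarrow a_{i}[|d_{i}|]$ du théorème \ref{cor1}, combinée aux réductions $5)$ et $6)$ du lemme \ref{lem1} qui découpent le méandre en blocs dès qu'une égalité de sommes partielles apparaît, diminue soit $k$, soit la taille totale $n$. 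Je compte donc faire une récurrence sur le couple $(n,k)$ ordonné lexicographiquement, en vérifiant à chaque découpage que la somme des $[\frac{k_{j}+1}{2}]$ sur les blocs, où $\sum k_{j}\leq k$, reste majorée par $[\frac{k+1}{2}]$ grâce à la sous-additivité de la partie entière.

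**Preuve de 2).** Pour l'équivalence, le sens direct est immédiat : si $\Gamma(\underline{a})$ ne contient qu'un seul cycle maximal, l'indice est égal à la dimension de ce cycle, et le lemme \ref{lem25} montre que cette dimension est divisible par $a_{1}\wedge\cdots\wedge a_{k}$ ; il faudra alors identifier cette dimension à $a_{1}\wedge\cdots\wedge a_{k}$ exactement. Le point délicat est la réciproque et l'encadrement précis de la dimension du cycle unique : il s'agit de montrer que $\chi[\mathfrak{p}(\underline{a})]\geq a_{1}\wedge\cdots\wedge a_{k}$ avec égalité \emph{uniquement} lorsqu'il n'y a qu'un cycle. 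J'envisage d'établir que la dimension de tout cycle maximal est un multiple de $a_{1}\wedge\cdots\wedge a_{k}$ (en réduisant au cas $a_{1}\wedge\cdots\wedge a_{k}=1$ via le lemme \ref{lem25}, qui ramène $\Gamma(\alpha a_{1},\ldots,\alpha a_{k})$ à $\Gamma(a_{1},\ldots,a_{k})$ en multipliant toutes les dimensions par $\alpha$), de sorte que $\chi=\sum d_{j}\geq r\cdot(a_{1}\wedge\cdots\wedge a_{k})$ ; l'égalité $\chi=a_{1}\wedge\cdots\wedge a_{k}$ force alors $r=1$.

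**Obstacle principal.** La difficulté majeure réside dans le sens $\chi=\text{pgcd}\Rightarrow r=1$ de l'assertion 2), car il faut garantir que chaque cycle maximal contribue au moins $a_{1}\wedge\cdots\wedge a_{k}$ à l'indice. Cela nécessite de démontrer que toute dimension de cycle maximal est divisible par le pgcd des $a_{i}$, ce qui n'est pas formellement énoncé dans l'excerpt pour les dimensions individuelles (seul le lemme \ref{lem25} le donne pour la somme $\chi$). Je m'attends à devoir renforcer le lemme \ref{lem25} en suivant son argument par récurrence au niveau des dimensions de chaque cycle maximal, en utilisant l'équivalence de méandres du lemme \ref{lem10} qui multiplie toutes les dimensions par $\alpha$ lors du passage de $\Gamma(a_{1},\ldots,a_{k})$ à $\Gamma(\alpha a_{1},\ldots,\alpha a_{k})$.
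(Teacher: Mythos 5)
Votre traitement du point 1) comporte une lacune r\'eelle : l'in\'egalit\'e de sous-additivit\'e sur laquelle repose votre r\'ecurrence, \`a savoir $[\frac{k_{1}+1}{2}]+[\frac{k_{2}+1}{2}]\leq[\frac{k_{1}+k_{2}+1}{2}]$, est fausse d\`es que $k_{1}$ et $k_{2}$ sont tous deux impairs. Concr\`etement, dans le cas $d_{i}=0$ (qui d\'etache un cycle maximal de dimension $a_{i}$ et laisse une composition \`a $k-1$ parts), votre hypoth\`ese de r\'ecurrence ne donne que $r\leq 1+[\frac{k}{2}]$, ce qui exc\`ede $[\frac{k+1}{2}]$ pour tout $k$ pair ; le m\^eme probl\`eme appara\^it pour le d\'ecoupage du 5) du lemme \ref{lem1} en deux blocs de longueurs impaires. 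La borne est bien vraie, mais parce que la g\'eom\'etrie du m\'eandre interdit \`a ces configurations extr\^emes de se produire simultan\'ement, ce qu'une r\'ecurrence par d\'ecoupage ne peut pas capturer telle quelle. Le papier proc\`ede d'ailleurs tout autrement : c'est un comptage global, sans r\'ecurrence. Chaque cycle maximal de dimension $>1$ a exactement deux bouts (lemme \ref{lem11}) et chaque cycle maximal de dimension $1$ a ses extr\'emit\'es en des centres de blocs (remarque \ref{rr}) ; un argument d'embo\^itement (deux bouts de cycles maximaux distincts dans un m\^eme bloc forceraient l'un des cycles \`a \^etre \`a l'int\'erieur de l'autre) montre que chacun des $k+1$ blocs --- les $k$ blocs du bas et le bloc du haut --- porte au plus un bout ou centre d'un cycle maximal, d'o\`u $2r\leq k+1$. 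R\'eparer votre sch\'ema demanderait de renforcer l'hypoth\`ese de r\'ecurrence pr\'ecis\'ement par ce type d'information, c'est-\`a-dire en substance de refaire cet argument.

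Pour le point 2), votre strat\'egie co\"incide avec celle du papier : montrer, par r\'ecurrence sur $a_{1}+\cdots+a_{k}$ et via les \'equivalences du lemme \ref{lem10} (compl\'et\'ees par le cas $d_{i}=0$), que $\Gamma(\alpha a_{1},\ldots,\alpha a_{k})$ et $\Gamma(a_{1},\ldots,a_{k})$ ont le m\^eme nombre de cycles maximaux, avec dimensions multipli\'ees par $\alpha$ ; il en r\'esulte que le pgcd divise chaque dimension, donc $\chi\geq r\,(a_{1}\wedge\cdots\wedge a_{k})$, ce qui force $r=1$ quand $\chi$ \'egale le pgcd. En revanche, l'autre sens reste inachev\'e chez vous : vous constatez seulement que la dimension $s$ de l'unique cycle maximal est un multiple du pgcd et dites qu'il faudra l'identifier au pgcd, sans fournir d'argument. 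Il manque la divisibilit\'e inverse, que le papier obtient en remarquant que $s$ divise chaque $a_{i}$, donc divise $a_{1}\wedge\cdots\wedge a_{k}$ ; c'est cette \'etape qu'il vous faut ajouter pour conclure.
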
 
\begin{proof}
\begin{enumerate}
\item[1)] On pose $n=a_{1}+\dots+a_{k}$,
  $a_{k+1}=-n$,\\ $I_{i}=[a_{1}+\dots+a_{i-1}+1,a_{1}+\dots+a_{i}],\;1\leq
  i\leq k$ et $I_{k+1}=[1,n]$. Soit $1\leq i\leq k+1$ tel que
  $|a_{i}|>1$. Supposons qu'il existe deux cycles maximaux distincts X
  et Y de $\Gamma(a_{1},\ldots,a_{k})$, $x\in I_{i}\cap S_{X}$ et
  $y\in I_{i}\cap S_{Y}$ tels ques $x$ et
  $2(a_{1}+\dots+a_{i-1})+a_{i}-x+1$ sont li\'es par un bout de X, $y$
  et $2(a_{1}+\dots+a_{i-1})+a_{i}-y+1$ sont li\'es par un bout de
  Y. On voit alors que, soit le sommet $x$ est compris entre les
  sommets $y$ et $2(a_{1}+\dots+a_{i-1})+a_{i}-y+1$, soit le sommet
  $y$ est compris entre les sommets $x$ et
  $2(a_{1}+\dots+a_{i-1})+a_{i}-x+1$. Il s'ensuit que l'un des deux
  cycles X et Y est \`a l'int\'{e}rieur de l'autre, ce qui
  impossible. Il en r\'esulte que pour tout $1\leq i\leq k+1$, il
  existe au plus un sommet $ x\leq
  {\color{black}a_{1}+\cdots+a_{i-1}+}[\frac{a_{i}}{2}]$ appartenant \`a
  $I_{i}$ tel que $x$ et $2(a_{1}+\dots+a_{i-1})+a_{i}-x+1$ sont
  li\'es par un bout d'un cycle maximal de
  $\Gamma(a_{1},\ldots,a_{k})$. Compte tenu de la remarque \ref{rr} et
  du lemme \ref{lem11}, on d\'eduit alors que le nombre de cycles
  maximaux de $\Gamma(a_{1},\ldots,a_{k})$ est major\'e par
  $[\frac{k+1}{2}]$.

\item[2)] Pour tout $n\in\mathbb{N}^{\times}$, le m\'eandre $\Gamma(\underbrace{1,\ldots,1}_{n})$ est compos\'e de $[\frac{n+1}{2}]$ segments, o\`u chaque segment est consid\'er\'e un cycle maximal de dimension $1$, et le m\'eandre $\Gamma(\underbrace{\alpha,\ldots,\alpha}_{n})$ est compos\'e de $[\frac{n+1}{2}]$ cycles maximaux, o\`u chaque cycle est de dimension $\alpha$.\\
 Soit $1\leq i\leq k$, supposons que $d_{i}=0$ (voir les d\'efinitions des $d_{i}$ dans l'\'enonc\'e du lemme \ref{lem10}), le {\color{black}m\'eandre} $\Gamma(a_{1},\ldots,a_{k})$ est compos\'e d'un cycle maximal de dimension $a_{i}$ et du m\'eandre $\Gamma(a_{1},\ldots,a_{i-1},a_{i+1},\ldots,a_{k})$, alors que le m\'eandre $\Gamma(\alpha a_{1},\ldots,\alpha a_{k})$ est compos\'e d'un cycle maximal de dimension $\alpha a_{i}$ et du m\'eandre $\Gamma(\alpha a_{1},\ldots,\alpha a_{i-1},\alpha a_{i+1},\ldots,\alpha a_{k})$. Supposons que $d_{i}\neq 0$, il r\'esulte du lemme \ref{lem10} que les m\'eandres $\Gamma(a_{1},\ldots,a_{k})$ et\\
 $\Gamma(a_{1},\ldots,a_{i-1},a_{i}[|d_{i}|],a_{i+1},\ldots,a_{k})$ sont \'equivalents, aussi que les m\'eandres \\
 $\Gamma(\alpha a_{1},\ldots,\alpha a_{k})$ et $\Gamma(\alpha a_{1},\ldots,\alpha a_{i-1},\alpha (a_{i}[|d_{i}|]),\alpha a_{i+1},\ldots,\alpha a_{k})$ sont \'equivalents. Il s'ensuit qu'un raisonnement par r\'ecurrence sur la somme $a_{1}+\dots+a_{k}$ implique que les m\'eandres $\Gamma(a_{1},\ldots,a_{k})$ et
 $\Gamma(\alpha a_{1},\ldots,\alpha a_{k})$ ont le m\^eme nombre de cycles maximaux, appelons-les respectivement $X_{1},\ldots,X_{e}$ et $\tilde{X}_{1},\ldots,\tilde{X}_{e}$, et qu'ils peuvent \^etre num\'erot\'es de sorte que $\dim(\tilde{X}_{i})=\alpha\dim(X_{i}),\;1\leq i\leq e$. On en d\'eduit alors que le PGCD de $a_{1},\ldots,a_{k}$ divise la dimension de chaque cycle maximal de $\Gamma(a_{1},\ldots,a_{k})$. En particulier, si $\chi[\mathfrak{p}(a_{1},\ldots,a_{k})]=a_{1}\wedge\ldots\wedge a_{k}$, alors $\Gamma(a_{1},\ldots,a_{k})$ contient un seul cycle maximal.
  
R\'eciproquement, supposons que $\Gamma(a_{1},\ldots,a_{k})$ contient un seul cycle maximal de dimension $s$, alors $s$ divise $a_{i}$, $ 1\leq i\leq k$, d'o\`u $s$ divise le PGCD de $a_{1},\ldots,a_{k}$, et par suite $\chi[\mathfrak{p}(a_{1},\ldots,a_{k})]=s=a_{1}\wedge\ldots\wedge a_{k}$. 

\end{enumerate}
\end{proof}
 
 \bibliographystyle{smfplain} 
\bibliography{biblio}

\providecommand{\bysame}{\leavevmode ---\ }
\providecommand{\og}{``}
\providecommand{\fg}{''}
\providecommand{\smfandname}{\&}
\providecommand{\smfedsname}{\'eds.}
\providecommand{\smfedname}{\'ed.}
\providecommand{\smfmastersthesisname}{M\'emoire}
\providecommand{\smfphdthesisname}{Th\`ese}
\begin{thebibliography}{1}

\bibitem{MB}
{\scshape M.~Bouhani} -- {\og {Formes lin\'eaires de type r\'eductif et
  unipotent}\fg}, \emph{J. Lie Theory} \textbf{29} (2019), no.~1, p.~143--179.

\bibitem{VCM2}
{\scshape V.~Col, M.~Hyatt, A.~Dougherty {\normalfont \smfandname} N.~Mayers}
  -- {\og {Frobenius seaweed Lie algebras III}\fg}, \emph{J. Gen. Lie Theory
  Appl.} (2017), 11: 266. doi: 10.4172/1736-4337.1000266.

\bibitem{VCM}
{\scshape V.~Col, M.~Hyatt, C.~Magnant {\normalfont \smfandname} H.~Wang} --
  {\og {Frobenius seaweed Lie algebras II}\fg}, \emph{J. Gen. Lie Theory Appl.}
  (2015), 9: 227. doi:10.4172/1736-4337.1000227.

\bibitem{VCH}
{\scshape V.~Coll, C.~Magnant {\normalfont \smfandname} H.~Wang} -- {\og
  {Families of Frobenius seaweed Lie algebras}\fg}, \emph{ArXiv}
  \textbf{1204.5266} (2012).

\bibitem{D.K}
{\scshape V.~Dergachev {\normalfont \smfandname} A.~A. Kirillov} -- {\og {Index
  of Lie algebras of seaweed type}\fg}, \emph{J. Lie Theory} \textbf{10}
  (2010), no.~2, p.~331--343.

\bibitem{AE}
{\scshape A.~G. Elashvili} -- {\og {On the index of parabolic subalgebras of
  semi-simple Lie algebras }\fg}, \emph{Preprint} (1990).

\bibitem{karnauhova}
{\scshape A.~Karnauhova {\normalfont \smfandname} S.~Liebscher} -- {\og
  {Connected components of meanders: I. bi-rainbow meanders}\fg},
  \emph{Discrete \& Continuous Dynamical Systems - A} (2017), p.~4835--4856, 37
  (9) : doi: 10.3934/dcds.2017208.

\bibitem{A.O}
{\scshape A.~Moreau {\normalfont \smfandname} O.~Yakimova} -- {\og {Coadjoint
  orbits of reductive type of parabolic and seaweed Lie subalgebras}\fg},
  \emph{Int. Math. Res. Not. IMRN} (2012), no.~19, p.~4475--4519.

\end{thebibliography}

\end{document}